\newtheorem{thm}{Theorem}[section]
\newtheorem{prop}[thm]{Proposition}
\newtheorem{lem}[thm]{Lemma}
\newtheorem{cor}[thm]{Corollary}
\newtheorem{problem}[thm]{Problem}
\theoremstyle{remark}
\newtheorem{nonexa}[thm]{(Non-)Example}
\newtheorem{notation}[thm]{Notation}
\newtheorem*{ack}{Acknowledgements}
\theoremstyle{definition}
\newtheorem{defi}[thm]{Definition}
\newcommand{\Z}{\mathbb{Z}}
\newcommand{\R}{\mathbb{R}}
\newcommand{\N}{\mathbb{N}}
\DeclareMathOperator{\id}{id}
\DeclareMathOperator{\rk}{rk_{\Z}}
\DeclareMathOperator{\Aut}{Aut}
\DeclareMathOperator{\Cayop}{Cay}
\DeclareMathOperator{\tors}{tors}
\DeclareMathOperator{\diam}{diam}
\def\epsilon{\varepsilon}
\newcommand{\Cay}[2]{\Cayop(#1,#2)}
\def\gvertex#1{%
  \fill #1 circle(0.1);
}
\author{Clara L\"oh}
\title[Finitely generated Abelian groups with isomorphic Cayley graphs]{Which finitely generated Abelian groups admit isomorphic Cayley graphs?}
\date{\today.\ \copyright{\ C.~L\"oh 2012}, 
     MSC~2010 classification: 05C25, 05C63, 20F65\\
     \phantom{\quad} Partially supported by the \emph{Institut Mittag-Leffler} (Djursholm, Sweden)}
\begin{document}

\begin{abstract}
  We show that Cayley graphs of finitely generated Abelian groups are
  rather rigid. As a consequence we obtain that two finitely generated
  Abelian groups admit isomorphic Cayley graphs if and only if they
  have the same rank and their torsion parts have the same
  cardinality. The proof uses only elementary arguments and is
  formulated in a geometric language.
\end{abstract}

\maketitle

\section{Introduction}

Cayley graphs allow us to view groups as combinatorial and geometric
objects. For instance, one of the main objectives of geometric group
theory is to understand the relation between algebraic properties of
finitely generated groups and (large scale) geometric properties
of their Cayley graphs. On the other hand, the structure of Cayley
graphs of finite groups plays an important role in
combinatorics. 

This article shows that all Cayley graphs of finitely generated Abelian
groups are rather rigid (Theorem~\ref{mainthm}), and as a consequence
that two finitely generated Abelian groups admit isomorphic Cayley
graphs if and only if they have the same rank and their torsion parts
have the same cardinality (Corollary~\ref{maincor}).

We now describe the results in more detail. For the sake of
completeness, let us briefly recall some basic notation:

\begin{defi}[Cayley graph]
  Let $G$ be a group and let $S \subset G$ be a 
  subset 
  of~$G$. The \emph{Cayley graph of~$G$ with respect to~$S$} is the
  (unlabelled, undirected) graph~$\Cay GS$ whose vertex set is~$G$ and
  whose set of edges is given by
  \[ \bigl\{ \{g, g\cdot s\} 
     \bigm| g \in G,\ s \in (S \cup S^{-1}) \setminus \{e\}
     \bigr\}.
  \]
\end{defi}

\begin{defi}
  Two finitely generated groups~$G$ and~$G'$ \emph{admit isomorphic
    Cayley graphs} if there exist \emph{finite generating} sets~$S
  \subset G$ and $S' \subset G'$ of~$G$ and~$G'$ respectively such
  that the corresponding Cayley graphs~$\Cay GS$ and $\Cay {G'}{S'}$
  are isomorphic (as unlabelled, undirected graphs).
\end{defi}

If $G$ is a finitely generated Abelian group, then the \emph{torsion
  subgroup~$\tors G$} of~$G$, i.e., the subgroup of all elements
of~$G$ of finite order, is a finite group. Moreover, the
quotient~$G/\tors G$ is a finitely generated free Abelian group and
the rank of~$G/\tors G$ is called the \emph{rank~$\rk G$ of~$G$}. 

A finitely generated group~$G$ is a \emph{CI-group} if the following
holds~\cite{li}: Whenever $S, S' \subset G $ are symmetric finite
generating sets of~$G$ such that $\Cay GS$ and $\Cay G{S'}$ are
isomorphic, then there is a group automorphism~$\varphi \in \Aut(G)$
with~$\varphi(S) = S'$.  Based on results of Trofimov about
automorphism groups of graphs, M\"oller and
Seifter~\cite{moellerseifter} showed that all finitely generated free
Abelian groups (and more generally, finitely generated torsion-free
nilpotent groups) are CI-groups. Later, Ryabchenko~\cite{ryabchenko}
provided an elementary proof of the fact that finitely generated free
Abelian groups are CI-groups.

Even though not every finitely generated Abelian group is a
CI-group~\cite{elspasturner,li}, we will show in the following that
Cayley graphs of finitely generated Abelian groups are rigid in the
following sense:

\begin{thm}[Cayley graph rigidity]\label{mainthm}
  Let $G$ and $G'$ be finitely generated Abelian groups, and let $S
  \subset G$ and $S' \subset G'$ be finite generating sets of~$G$
  and~$G'$ respectively. If $\varphi \colon \Cay GS \longrightarrow
  \Cay {G'}{S'}$ is an isomorphism of graphs, then $\varphi$ induces 
  an affine isomorphism
  \begin{align*}
    G/\tors G & \longrightarrow G'/\tors G' \\
    [g] & \longmapsto [\varphi(g)]
  \end{align*}
  of finitely generated free Abelian groups.
\end{thm}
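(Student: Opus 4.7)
My plan is in three steps: normalising $\varphi$, proving preservation of $\tors$-cosets, and descending to the free Abelian quotients to apply Ryabchenko's theorem. Since left-translation by any element of $G'$ is a graph automorphism of $\Cay{G'}{S'}$, I first replace $\varphi$ by $x\mapsto\varphi(e)^{-1}\cdot\varphi(x)$, reducing to the case $\varphi(e)=e'$. Under this normalisation, the induced map on quotients sends $[e]$ to $[e']$, so it suffices to show that the induced map is a group isomorphism; the affine statement for the original $\varphi$ then follows on re-adding the translation $[\varphi(e)]$.

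The heart of the argument is to show $\varphi(g\cdot\tors G)\subseteq\varphi(g)\cdot\tors G'$ for every $g\in G$ (equality following by a symmetric argument for $\varphi^{-1}$). Since $\tors G$ is finite, every $\tors G$-coset has uniformly bounded diameter in $\Cay GS$, and the canonical projection $\Cay GS\to\Cay{G/\tors G}{\bar S}$ is a quasi-isometry whose fibres are exactly the $\tors G$-cosets. The goal is to characterise these fibres intrinsically. One natural route uses $\Aut(\Cay GS)$: because $G$ is Abelian, multiplication by any $t\in\tors G$ is a finite-order graph automorphism commuting with the regular $G$-action, and an elementary check shows that $\tors G$ is exactly the subgroup of finite-order elements in the centraliser of this action. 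Transferring the characterisation through $\varphi$ then requires recovering the $G$-action itself from the graph, for example as a maximal Abelian simply transitive subgroup of $\Aut(\Cay GS)$. An alternative, purely metric approach is to detect $\tors G$-cosets using suitable asymptotic invariants of $\Cay GS$ that are automatically respected by any graph isomorphism.

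Once coset preservation is in hand, $\bar\varphi([g]):=[\varphi(g)]$ is a well-defined bijection $G/\tors G\to G'/\tors G'$. The edges of $\Cay{G/\tors G}{\bar S}$ correspond precisely to those edges of $\Cay GS$ whose endpoints lie in distinct $\tors G$-cosets (the remaining edges projecting to loops), so $\bar\varphi$ is itself a graph isomorphism, now between Cayley graphs of finitely generated free Abelian groups. Ryabchenko's theorem, recalled in the introduction, then forces $\bar\varphi$ to be affine---linear by our normalisation---which is exactly the desired conclusion.

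The main obstacle is clearly the coset-preservation step: the torsion subgroup leaves no local footprint on $\Cay GS$, since small neighbourhoods of torsion elements are indistinguishable from small neighbourhoods elsewhere, so any characterisation of $\tors G$-cosets must exploit the global Abelian geometry. Pinning down such a characterisation and verifying that it really picks out $\tors G$-cosets, uniformly across all finitely generated Abelian groups and all choices of finite generating set, is where the serious work lies.
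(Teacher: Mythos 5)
Your outline correctly identifies the crux---showing that $\varphi$ maps $\tors G$-cosets into $\tors G'$-cosets---but it does not actually prove it, and the two routes you sketch for it do not work as stated. The route via $\Aut(\Cay GS)$ founders on the fact that the regular $G$-action is \emph{not} recoverable from the graph: a Cayley graph of an Abelian group can admit several non-conjugate, even non-isomorphic, simply transitive Abelian subgroups of its automorphism group (the complete graph $K_4$ carries simply transitive actions of both $\Z/4$ and $(\Z/2)^2$, and this is exactly why the torsion part can only be recovered up to cardinality, not up to isomorphism; see also Example~\ref{counterexample}, where the flip automorphisms show that $\Aut$ of the Cayley graph is far larger than the group). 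So ``the centraliser of the $G$-action'' is not an invariant a graph isomorphism can transport, and there is no canonical ``maximal Abelian simply transitive subgroup'' to pick out. The alternative ``purely metric approach via suitable asymptotic invariants'' is precisely the content of the theorem and is left entirely unspecified. Since you yourself flag this step as ``where the serious work lies,'' the proposal is a reduction of the theorem to its hardest part rather than a proof.

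There is a secondary gap in the final step: what is recalled in the introduction is that $\Z^r$ is a CI-group, i.e.\ that isomorphic Cayley graphs of the \emph{same} free Abelian group arise from generating sets related by a group automorphism. That existence statement does not imply that your particular induced isomorphism $\bar\varphi$ between the quotient Cayley graphs is itself affine, which is what you need (and which is the torsion-free case of the very theorem being proved). For comparison, the paper proceeds quite differently: it does not establish coset preservation first and then descend. Instead it proves directly, by induction on the generating set $S$, that $\pi_{G'}\circ\varphi$ is additive, using the machinery of quasi-convex geodesic lines: a $\|\pi_G(\cdot)\|_2$-maximal generator $s$ spans quasi-algebraic lines that are quasi-convex geodesics, graph isomorphisms preserve quasi-convexity and (by a geodesic-counting and reordering argument) quasi-algebraicity and quasi-type, and this lets one peel off $s$ and induct on the smaller generating set. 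Well-definedness on $\tors G$-cosets then falls out at the end as a consequence of additivity, rather than being the starting point.
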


The proof is based on a careful analysis of sufficiently convex
geodesic lines in Cayley graphs of finitely generated Abelian
groups. Similar to Ryabchenko's arguments, the key idea is that
geodesic lines generated by ``longest'' generators satisfy a certain
uniqueness property that allows to translate between the combinatorial
structure of Cayley graphs and the algebraic structure of the underlying 
Abelian groups.

Notice however that in general not every graph automorphism of a
Cayley graph of a finitely generated Abelian group is induced from an
affine group automorphism (Example~\ref{counterexample}).

As a consequence of Theorem~\ref{mainthm} we can characterise which
finitely generated Abelian groups admit isomorphic Cayley graphs:

\begin{cor}\label{maincor}
  Two finitely generated Abelian groups admit isomorphic Cayley 
  graphs if and only if they have the same rank and their torsion 
  parts have the same cardinality.
\end{cor}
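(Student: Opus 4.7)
The plan is to treat the two implications separately, reducing the forward direction to Theorem~\ref{mainthm} and handling the backward direction by an explicit construction.

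For the forward implication, suppose $\varphi \colon \Cay GS \to \Cay{G'}{S'}$ is a graph isomorphism. Theorem~\ref{mainthm} supplies an affine isomorphism $G/\tors G \to G'/\tors G'$ of finitely generated free Abelian groups, which immediately forces $\rk G = \rk G'$. To read off the torsion count, I would use that well-definedness of the induced map $[g] \mapsto [\varphi(g)]$ means each $\tors G$-coset in~$G$ is sent by~$\varphi$ into a single $\tors G'$-coset in~$G'$; combined with bijectivity of~$\varphi$ and of the induced quotient map, this upgrades to a bijection between each $\tors G$-coset and the corresponding $\tors G'$-coset, yielding $|\tors G| = |\tors G'|$.

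For the backward implication, suppose $\rk G = \rk G' = n$ and $|\tors G| = |\tors G'|$. The structure theorem lets me write $G = \Z^n \oplus \tors G$ and $G' = \Z^n \oplus \tors G'$, and I would take the generating sets $S := \{e_1,\dots,e_n\} \cup \tors G$ and $S' := \{e_1,\dots,e_n\} \cup \tors G'$. Because $\tors G$ and $\tors G'$ are subgroups, the edges contributed by the torsion generators form a complete graph on each ``torsion fibre'' $\{v\} \times \tors G$ (respectively $\{v\} \times \tors G'$), while the~$e_i$ contribute the standard $\Z^n$-edges between corresponding vertices in parallel fibres. Hence both Cayley graphs are isomorphic to the same Cartesian product of the standard Cayley graph of~$\Z^n$ with the complete graph on $|\tors G|$ vertices.

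Given Theorem~\ref{mainthm}, essentially no difficulty remains; the only point requiring a moment's care is combining well-definedness of the induced quotient map with the bijectivity of~$\varphi$ in order to conclude the cardinality equality for the torsion parts.
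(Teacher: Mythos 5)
Your proposal is correct and follows essentially the same route as the paper: the backward direction via the Cartesian product of the standard Cayley graph of $\Z^n$ with the complete graph on $|\tors G|$ vertices, and the forward direction by reading off the rank from the affine isomorphism of Theorem~\ref{mainthm} and the torsion cardinality from well-definedness of the induced quotient map together with bijectivity of~$\varphi$. The only cosmetic difference is that the paper obtains $|\tors G| = |\tors G'|$ by proving one inequality and then applying the same argument to~$\varphi^{-1}$, whereas you argue both directions at once using surjectivity; both are fine.
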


As a long-term perspective one might hope that a thorough
understanding of the combinatorics of Cayley graphs of Abelian groups
could lead to an elementary proof of quasi-isometry rigidity of
virtually Abelian groups.


This article is organised as follows: In Section~\ref{sec:geodesics}
we will study the relation between geometric and algebraic properties
of geodesics in Cayley graphs of Abelian groups, which will be the
main tool to prove Theorem~\ref{mainthm}. Section~\ref{sec:proofthm}
contains the proof of Theorem~\ref{mainthm}. In
Section~\ref{sec:proofcor} we deduce Corollary~\ref{maincor} from the
theorem. Finally, for the sake of completeness Section~\ref{sec:appx}
contains an alternative approach to detecting the parity of the
torsion part, following a discussion on
\textsf{mathoverflow.net}~\cite{mathoverflow}.

\begin{ack} 
  I would like to thank Lars Scheele for bringing the discussion on
  \textsf{mathoverflow.net}~\cite{mathoverflow} about ``Cayley graph
  equivalence'' to my attention.  Moreover, I would like to thank the
  \emph{Institut Mittag-Leffler} for its hospitality, and R\"oggi
  M\"oller, Tobias Hartnick and Anders Karlsson for interesting
  discussions.
\end{ack}

\section{Geodesic lines in finitely generated Abelian groups}\label{sec:geodesics}

Even though Theorem~\ref{mainthm} and its proof are purely
combinatorial we prefer to formulate and organise the arguments in a
geometric language, in terms of geodesics and suitable convexity
properties. In Section~\ref{subsec:geodesics} we introduce the basic
geometric language for graphs and present the basic reordering
argument for geodesics in Abelian groups, in
Section~\ref{subsec:convexgeod} and
Section~\ref{subsec:quasiconvexgeod} we study the relation between
algebraic and convexity properties of geodesic lines, and in
Section~\ref{subsec:parallelism} we list basic properties of
parallel algebraic lines in Cayley graphs.

\subsection{Graphs and geodesics}\label{subsec:geodesics}

In the present article, \emph{graphs} are unlabelled, undirected,
simple graphs. An \emph{isomorphism} between graphs~$(V,E)$ and
$(V',E')$ is a bijective map~$\varphi \colon V \longrightarrow V'$
such that for all~$u, v \in V$ we have that $\{u,v\} \in E$ if and
only if~$\{\varphi(u), \varphi(v)\} \in E'$. If $\Gamma = (V,E)$ is a
(connected) graph, then the graph structure induces a
path-metric~$d_\Gamma$ on~$V$ characterised by all edges having
length~$1$. 

For our arguments the following observation will be essential: By
definition of the graph metric, any graph isomorphism is an isometry
between the sets of vertices of the graphs in question. So anything 
that can be expressed purely in terms of metric properties of the 
underlying graphs will be preserved by graph isomorphisms.

\begin{defi}[geodesic line/segment]
  Let $\Gamma = (V,E)$ be a graph. 
  \begin{itemize}
    \item A \emph{geodesic segment} in~$\Gamma$ is a finite path
      $(v_0, \dots, v_n)$ (of vertices) in~$\Gamma$ with
      \[ d_\Gamma(v_0, v_n) = n
      \]
      (equivalently, for all~$j,k \in \{0, \dots, n\}$ one
      has~$d_\Gamma(v_j, v_k) = |j-k|$).
    \item A \emph{geodesic line} in~$\Gamma$ is an infinite path~$\gamma \colon
      \Z \longrightarrow V$ in~$\Gamma$ satisfying 
      \[ d_\Gamma(\gamma(j), \gamma(k)) = |j-k|
      \]
      for all~$j, k \in \Z$ (equivalently, all consecutive finite
      subsequences of~$\gamma$ are geodesic segments).
  \end{itemize}
\end{defi}

For the sake of readability, whenever convenient we will also use the
sequence notation~``$\gamma_j := \gamma(j)$'' for points on
$\Z$-paths~$\gamma$ in graphs.

In particular, if $G$ is a group and $S$ is a generating set of~$G$,
then $G$ inherits a metric~$d_S$ from the graph structure of~$\Cay
GS$; of course, this is nothing but the word metric on~$G$ with
respect to~$S$.

One of the main points is that in Abelian groups, commutativity of 
the group structure allows us to generate new geodesics out of old
ones by changing the order of steps (Figure~\ref{fig:reorder}):

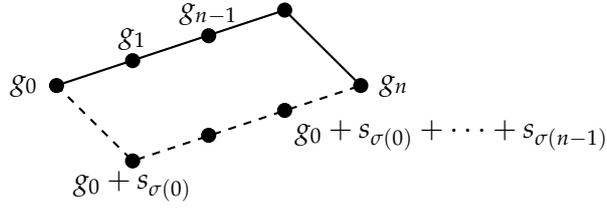
\begin{figure}
  \begin{center}
    \begin{tikzpicture}
      \draw[thick] (0,0) -- (3,1) -- (4,0);
      \draw[thick,dashed] (0,0) -- (1,-1) -- (4,0);
      \draw (0,0) node[anchor=east] {$g_0$\;};
      \draw (4,0) node[anchor=west] {\;$g_n$};
      \draw (1,0.33) node[anchor=south] {$g_1$};
      \draw (2,0.66) node[anchor=south] {$g_{n-1}$};
      \draw (1,-1) node[anchor=north] {$g_0 + s_{\sigma(0)}$};
      \draw (3,-0.33) node[anchor=north west] {$g_0 + s_{\sigma(0)} + \dots + s_{\sigma(n-1)}$};
      \gvertex{(0,0)} 
      \gvertex{(1,-1)}
      \gvertex{(2,-0.66)}
      \gvertex{(3,-0.33)}
      \gvertex{(1,0.33)}
      \gvertex{(2,0.66)}
      \gvertex{(3,1)}
      \gvertex{(4,0)}
    \end{tikzpicture}
  \end{center}
  
  \caption{Reordering geodesic segments in Abelian groups}\label{fig:reorder}
\end{figure}

\begin{prop}[reordering geodesic segments]\label{prop:reorder}
  Let $G$ be an Abelian group, let $S$ be a symmetric generating set
  of~$G$, and let $(g_0, \dots, g_n)$ be a $d_S$-geodesic
  segment in~$G$. For $j \in \{0, \dots, n-1\}$ we write
  \[ s_j := g_{j+1} - g_j. 
  \]
  Then $s_j \in S$, and for every permutation~$\sigma$ of~$\{0,\dots, n-1\}$ 
  the reordered sequence
  \[ ( g_0, g_0 + s_{\sigma(0)}, g_0 + s_{\sigma(0)} + s_{\sigma(1)}
     , \dots, g_0 + s_{\sigma(0)} + \dots + s_{\sigma(n-1)})  
  \]
  is a $d_S$-geodesic segment starting in~$g_0$ and ending in~$g_n$.
\end{prop}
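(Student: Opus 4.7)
The plan is to separate the statement into its two assertions and handle each in turn. The first assertion, that $s_j := g_{j+1} - g_j$ lies in $S$, is essentially immediate from the definition of the Cayley graph: since $(g_0, \dots, g_n)$ is a path in $\Cay GS$, each pair $\{g_j, g_{j+1}\}$ is an edge, so $g_{j+1} = g_j + s$ for some $s \in (S \cup S^{-1}) \setminus \{e\} = S \setminus \{e\}$ (using that $S$ is symmetric), and thus $s_j \in S$.

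For the second assertion, the plan has two ingredients. First, using commutativity of $G$, the endpoint of the reordered sequence is
\[ g_0 + s_{\sigma(0)} + \dots + s_{\sigma(n-1)} = g_0 + s_0 + \dots + s_{n-1} = g_n, \]
independently of $\sigma$. Second, each step of the reordered sequence differs by some $s_{\sigma(j)} \in S$, so the reordered tuple is a genuine path in $\Cay GS$ of length exactly $n$ from $g_0$ to $g_n$.

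It remains to upgrade ``path of length $n$ from $g_0$ to $g_n$'' to ``geodesic segment.'' For this I would invoke the elementary observation that if $(v_0, \dots, v_n)$ is a path in a graph with $d_\Gamma(v_0, v_n) = n$, then automatically $d_\Gamma(v_j, v_k) = |j-k|$ for all $j,k$: the triangle inequality along the path gives $d_\Gamma(v_j,v_k) \leq |j-k|$, and any strict inequality would, by concatenation with the initial segment $(v_0,\dots,v_j)$ and the terminal segment $(v_k,\dots,v_n)$, yield a path from $v_0$ to $v_n$ of length strictly less than $n$, contradicting $d_\Gamma(v_0, v_n) = n$. Applying this to the reordered sequence, whose total length $n$ equals $d_S(g_0, g_n)$ since the original sequence was geodesic, finishes the proof.

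There is no real obstacle here: the proposition is essentially a bookkeeping consequence of commutativity together with the trivial remark that a path realising the distance between its endpoints is automatically a geodesic in the stronger pointwise sense. The only minor care needed is to invoke the symmetry of $S$ correctly when identifying the step differences $s_j$ as elements of $S$.
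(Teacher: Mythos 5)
Your proof is correct and follows essentially the same route as the paper: identify the steps $s_j\in S$, use commutativity to see the reordered path still ends at $g_n$, and conclude from the fact that it is a path of length $n=d_S(g_0,g_n)$. The only difference is that you spell out the (true) equivalence between ``path realising the distance between its endpoints'' and the pointwise geodesic condition, which the paper already records as an equivalent formulation in its definition of geodesic segment.
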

\begin{proof}
  By definition of~$\Cay GS$, we have~$s_j \in S$ for all~$j \in \{0,
  \dots, n-1\}$ and the reordered sequence is a path in~$\Cay GS$. 
  By construction, the reordered path starts in~$g_0$ and ends in
  \[ g_0 + \sum_{j=0}^{n-1} s_{\sigma(j)} = g_0 + \sum_{j=0}^{n-1} s_j
      = g_0 + \sum_{j=0}^{n-1} (g_{j+1} - g_j) = g_n. 
  \]
  Moreover, because the reordered sequence has length~$n = d_S(g_0,
  g_n)$ the claim follows.
\end{proof}

\subsection{Convex geodesic lines}\label{subsec:convexgeod}

As a first step, we will give the key argument linking combinatorial
and algebraic structure of geodesic lines in a special and
straightforward case (which, e.g., is enough for the torsion-free
case).

\begin{defi}[algebraic line]
  Let $G$ be an Abelian group, let $S$ be a symmetric generating set, 
  and let $s \in S$. An \emph{algebraic line of type~$s$} in~$\Cay
  GS$ is a $\Z$-path in~$\Cay GS$ of the form
  \begin{align*}
    \Z & \longrightarrow G \\
    n & \longmapsto h + n\cdot s
  \end{align*}
  for some~$h \in G$.
\end{defi}

\begin{defi}[convex geodesic line]
  Let $\Gamma = (V,E)$ be a graph.  A \emph{convex geodesic line}
  in~$\Gamma$ is a geodesic line~$\gamma \colon \Z \longrightarrow V$
  with the following property: for all~$n,m \in \Z$ with~$n\leq m$
  there is exactly one geodesic segment in~$\Gamma$ starting
  in~$\gamma_n$ and ending in~$\gamma_m$, namely~$(\gamma_n,
  \gamma_{n+1}, \dots, \gamma_m)$.
\end{defi}

The following proposition describes the interaction between
algebraicity and convexity and shows in particular that graph
isomorphisms of Cayley graphs of finitely generated free Abelian
groups map algebraic lines of ``maximal'' type to algebraic
lines.

\begin{prop}[convex geodesic lines vs.~algebraic geodesic lines]
  \label{prop:convalg}  
  \hfil
  \begin{enumerate}
    \item If $G = \Z^r$ for some~$r \in \N$, and $S \subset G$ is a
      (symmetric) finite generating set, and $s \in S \setminus\{0\}$ is a
      $\|\cdot\|_2$-maximal element of~$S$, then all algebraic
      lines of type~$s$ in~$\Cay GS$ are convex geodesic lines.
    \item
      Graph isomorphisms map convex geodesic lines to convex geodesic lines.
    \item 
      If $G$ is an Abelian group and $S \subset G$ is a finite subset, then
      every convex geodesic line in~$\Cay GS$ is algebraic.
  \end{enumerate}
\end{prop}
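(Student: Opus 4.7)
My plan is to treat the three parts essentially independently, with the crucial algebraic/geometric crossover happening in parts (1) and (3) via two different uses of the strict convexity of $\|\cdot\|_2$ and the reordering principle (Proposition~\ref{prop:reorder}) respectively.

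For part (1), fix an algebraic line $\gamma_n = h + n \cdot s$ of type~$s$. I first check it is a geodesic: a path of length~$k$ from~$h$ to~$h + m \cdot s$ writes $m \cdot s$ as a sum of $k$ elements of~$S \cup S^{-1}$; since $S$ is symmetric and $s$ is $\|\cdot\|_2$-maximal in~$S$, the triangle inequality forces $k \geq m$. For convexity, suppose $(g_0, \dots, g_m)$ is any geodesic segment from~$\gamma_0$ to~$\gamma_m$, with differences $s_j = g_{j+1} - g_j \in S$. Then $\sum_j s_j = m \cdot s$ and
\[
   m \cdot \|s\|_2 = \Bigl\| \sum_{j=0}^{m-1} s_j \Bigr\|_2 \leq \sum_{j=0}^{m-1} \|s_j\|_2 \leq m \cdot \|s\|_2.
\]
Equality throughout forces all $s_j$ to lie on a common ray through the origin (by strict convexity of $\|\cdot\|_2$) and to have norm $\|s\|_2$; combined with $\sum s_j = m \cdot s$, this gives $s_j = s$ for every~$j$, so the segment coincides with~$\gamma$.

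For part (2), the statement is purely metric. If $\varphi \colon \Gamma \to \Gamma'$ is a graph isomorphism, then $\varphi$ is an isometry of the underlying vertex sets, so paths of length equal to the distance between their endpoints are preserved under $\varphi$ and $\varphi^{-1}$. Hence if $\gamma$ is a convex geodesic line in~$\Gamma$, then $\varphi \circ \gamma$ is a geodesic line in~$\Gamma'$, and any competing geodesic segment from $\varphi(\gamma_n)$ to $\varphi(\gamma_m)$ would, after applying $\varphi^{-1}$, violate the uniqueness clause for~$\gamma$.

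For part (3), let $\gamma$ be a convex geodesic line in $\Cay GS$ and set $s_j := \gamma_{j+1} - \gamma_j$. The main obstacle is showing that all $s_j$ agree, and this is where the Abelian structure enters. For each $j$, the segment $(\gamma_j, \gamma_{j+1}, \gamma_{j+2})$ is a geodesic of length~$2$; by Proposition~\ref{prop:reorder}, swapping the two steps produces another geodesic segment $(\gamma_j,\, \gamma_j + s_{j+1},\, \gamma_{j+2})$ with the same endpoints. Convexity then forces these two segments to coincide, yielding $s_j = s_{j+1}$. Iterating, there is a single $s \in S \cup S^{-1}$ with $\gamma_n = \gamma_0 + n \cdot s$ for all~$n \in \Z$, so $\gamma$ is algebraic (of type $s$ or $-s$, which are in $S$ by symmetry). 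The only subtlety is that convexity must be applied at each consecutive triple; no global induction on length is needed since equality of $s_j$ propagates neighbourwise.
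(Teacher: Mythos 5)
Your proposal is correct and follows essentially the same route as the paper: part~(1) via the equality case of the triangle inequality for $\|\cdot\|_2$, part~(2) via the isometry property of graph isomorphisms, and part~(3) via the reordering principle of Proposition~\ref{prop:reorder}. The only cosmetic difference is in part~(3), where you apply reordering to consecutive triples and propagate $s_j = s_{j+1}$ neighbourwise, whereas the paper argues contrapositively by swapping a step across a general window $(\gamma_n,\dots,\gamma_m)$; both are instances of the same idea.
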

\begin{proof}
  \emph{Ad~1:} Let $h \in G$. Then
  \begin{align*}
    \gamma \colon \Z & \longrightarrow G \\
    n & \longmapsto h + n \cdot s
  \end{align*}
  is a convex geodesic line in~$\Cay GS$, because: 
  Let $n, m \in \Z$ with~$m \geq n$. 
  Let $k := d_S(\gamma_n, \gamma_m)$ and let $(g_0, \dots, g_k)$ be a 
  geodesic segment in~$\Cay GS$ starting 
  in~$\gamma_n$ and ending in~$\gamma_m$. Thus, $g_{j+1} - g_j \in S$ 
  for all~$j \in \{0, \dots, k-1\}$ and
  \begin{align}\label{maxeq} 
    (m-n) \cdot s = \gamma_m - \gamma_n = \sum_{j=0}^{k-1} (g_{j+1} - g_j).
  \end{align}
  Because $s$ is $\|\cdot\|_2$-maximal in~$S$ and $s \neq 0$, it
  follows that $k \geq m-n$. Furthermore, because $(g_0, \dots, g_k)$
  is geodesic, we must have~$k = m-n$ and (again by
  $\|\cdot\|_2$-maximality of~$s$) we have $\|g_{j+1} - g_j\|_2 =
  \|s\|_2$ for all~$j \in \{0, \dots, k-1\}$. But now
  Equation~\ref{maxeq} and elementary geometry in~$\Z^r \subset \R^r$
  imply that $g_{j+1} - g_j = s$ for all~$j \in \{0, \dots, k-1\}$. Hence,
  \[ (g_0, \dots, g_m) = (h, h + s, \dots, h + n \cdot s)
     = \bigl( \gamma_n, \dots, \gamma_m\bigr),
  \]
  and so $\gamma$ is a convex geodesic  line in~$\Cay GS$.

  \emph{Ad~2:} Every graph ismorphism is an isometry with 
  respect to the graph metrics, and isometries map convex 
  geodesics to convex geodesics. This proves the second part.

  \emph{Ad~3:} Let $\gamma \colon \Z \longrightarrow G$ be a
  geodesic line in~$\Cay GS$ that is not algebraic. Then $\gamma$ is not 
  a convex geodesic line: Because $\gamma$ 
  is not algebraic, we can find~$n, m \in \Z$ with $n < m$ and
  \[ \gamma_{n +1} - \gamma_{n} \neq \gamma_{m} - \gamma_{m-1} =: s. 
  \]
  Clearly, $s \in S$ and
  because $(\gamma_{n}, \dots, \gamma_m)$ is a
  geodesic segment in~$\Cay GS$ and $G$ is Abe\-lian, also the sequence
  \[ \bigl( \gamma_n
          , \gamma_n + s
          , \gamma_{n+1} + s
          , \dots 
          , \gamma_{m-1} + s = \gamma_m
     \bigr) 
  \]
  is a geodesic segment (Proposition~\ref{prop:reorder}). However, by
  construction, this sequence does not coincide with~$(\gamma_n,
  \dots, \gamma_m)$. So, $\gamma$ is not a convex geodesic
  line. 
\end{proof}

Notice that not every algebraic line whose type is of infinite order 
is geodesic -- for example, in~$\Cay{\Z}{\{\pm 1,\pm2\}}$ algebraic lines 
of type~$1$ are \emph{not} geodesic.

\subsection{Quasi-convex geodesic lines}\label{subsec:quasiconvexgeod}

In general, torsion will introduce some ambiguities in geodesics and
we will not be able to find enough convex geodesic lines in Cayley
graphs of finitely generated Abe\-lian groups; for example, the
graph~$\Cay{\Z \times \Z/2}{\{\pm(1,0), \pm(1,1)\}}$ contains no convex
geodesic lines (even though the generating set consists of elements of
infinite order). Therefore, we introduce the slightly weaker notions 
of quasi-algebraic lines and quasi-convex geodesic lines.

\begin{notation}
  For a finitely generated Abelian group~$G$ we write
  \[ \pi_G \colon G \longrightarrow G/\tors G
  \] 
  for the canonical projection.
\end{notation}

\begin{defi}[quasi-algebraic line]
  Let $G$ be a finitely generated Abelian group, let $S \subset G$ be
  a (symmetric) generating set, and let $s \in S$. A
  \emph{quasi-algebraic line of quasi-type~$\pi_G(s)$} in~$\Cay GS$ is
  a $\Z$-path~$\gamma \colon \Z \longrightarrow G$ in $\Cay GS$ with
  the property that for all~$n \in\Z$ we have
  \[ \pi_G\bigl(\gamma_{n+1} - \gamma_n\bigr) =  \pi_G(s) \in G /\tors G. 
  \]
\end{defi}

Clearly, the quasi-type of quasi-algebraic lines is well-defined.

\begin{defi}[quasi-convex geodesic line]
  Let $\Gamma=(V,E)$ be a graph.  A \emph{quasi-convex geodesic line}
  in~$\Gamma$ is a geodesic line~$\gamma \colon \Z \longrightarrow V$
  in~$\Gamma$ such that for all~$c\in \R_{\geq 0}$ there exists a~$C \in
  \R_{\geq0}$ with the following property: All geodesic segments
  in~$\Gamma$ joining points that are $c$-close to~$\gamma$ stay
  uniformly $C$-close to~$\gamma$, i.e., for all~$n, m \in \Z$ with $n
  \leq m$, all points~$x, y \in \Gamma$ with $d_\Gamma(x,\gamma_n)
  \leq c$, $d_\Gamma(y, \gamma_m) \leq c$ and all geodesic
  segments~$\widetilde\gamma \colon \{0,\dots, d_\Gamma(x,y)\}
  \longrightarrow V$ from~$x$ to~$y$ we have for all~$j
  \in \{0, \dots, d_\Gamma(x,y)\}$ that 
  \[ d_\Gamma(\widetilde\gamma_j, \gamma_{n+j}) \leq C.
  \]
\end{defi}

Notice that quasi-convexity in geometric group theory is usually 
associated with a slightly weaker property, and that quasi-convexity 
as defined above also includes a so-called fellow-traveller property. 

In a general graph, not every geodesic line needs to be quasi-convex.
Conversely, also not all quasi-convex geodesic lines are convex: We
consider the integer square lattice $\Cay {\Z^2}{\{\pm(1,0),
  \pm(0,1)\}}$: It is not difficult to see that the sequence
\[ \dots\ , (-2,0),\ (-1,0),\ (0,0),\ (0,1),\ (1,1),\ (2,1),\ \dots
\]
is a quasi-convex geodesic line that is \emph{not} convex
(Figure~\ref{fig:notconvex}). Moreover, this is an example of a
quasi-convex geodesic line that is \emph{not} quasi-algebraic.

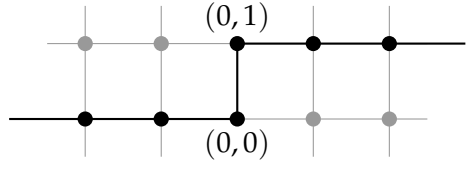
\begin{figure}
  \begin{center}
    \begin{tikzpicture}[thick]
      \begin{scope}[black!40]
        \draw[thin] (-2.5,0) -- (2.5,0);
        \draw[thin] (-2.5,1) -- (2.5,1);
        \draw[thin] (-2,-0.5) -- (-2,1.5);
        \draw[thin] (-1,-0.5) -- (-1,1.5);
        \draw[thin] (1,-0.5) -- (1,1.5);
        \draw[thin] (2,-0.5) -- (2,1.5);
        \gvertex{(-2,1)};
        \gvertex{(-1,1)};
        \gvertex{(1,0)};
        \gvertex{(2,0)};
      \end{scope}
      \gvertex{(-2,0)};
      \gvertex{(-1,0)};
      \gvertex{(0,0)};
      \gvertex{(0,1)};
      \gvertex{(1,1)};
      \gvertex{(2,1)};
      \draw[thick] (-3,0) -- (0,0) -- (0,1) -- (3,1);
      \draw (0,0) node[anchor=north] {$(0,0)$};
      \draw (0,1) node[anchor=south] {$(0,1)$};
    \end{tikzpicture}
  \end{center}

  \caption{A quasi-convex geodesic line that is neither convex nor
    quasi-algebraic (in $\Cay{\Z^2}{\{\pm(1,0),
      \pm(0,1)\}}$\,)}\label{fig:notconvex}
\end{figure}

However, we still have the following analogue of
Proposition~\ref{prop:convalg}:

\begin{prop}[quasi-convex geodesic lines vs.~quasi-algebraic lines]
  \label{prop:qconvalg}
  \hfil
  \begin{enumerate}
    \item 
      Let $G$ be a finitely generated Abelian group, let $S \subset G$ 
      be a symmetric finite generating set of~$G$, and let $\|\cdot\|_2$ be the 
      $\ell^2$-norm on~$G/\tors G$ induced by some chosen isomorphism
      $G/\tors G \cong \Z^{\rk G}$. Let $s \in S \setminus \tors G$ be 
      a $\|\pi_G(\cdot)\|_2$-maximal element of~$S$. Then all quasi-algebraic 
      lines of quasi-type~$\pi_G(s)$ in~$\Cay GS$ are quasi-convex 
      geodesic lines.
    \item Graph isomorphisms map quasi-convex geodesic lines to quasi-convex 
      geodesic lines.
    \item 
      Let $G$ and $G'$ be finitely generated Abelian groups, and
      suppose that there is an isomorphism~$\varphi \colon \Cay GS
      \longrightarrow \Cay {G'}{S'}$ for certain symmetric finite
      generating sets~$S \subset G$, $S' \subset G'$. Then $\varphi$
      maps algebraic quasi-convex geodesic lines to
      quasi-algebraic quasi-convex geodesic lines.
  \end{enumerate}
\end{prop}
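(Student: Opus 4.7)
The three parts divide naturally in difficulty and character. For part 1, I would imitate the proof of Proposition 2.6(1), but carry out the key estimates modulo torsion through $\pi_G$. Let $\gamma$ be a quasi-algebraic line of quasi-type $\pi_G(s)$. First, $\gamma$ is a geodesic: if $d_S(\gamma_n, \gamma_m) = k$ for $n < m$ and $u_0, \dots, u_{k-1} \in S$ realize this distance, then the identity $(m-n)\pi_G(s) = \sum_i \pi_G(u_i)$ together with $\|\pi_G(u_i)\|_2 \leq \|\pi_G(s)\|_2$ (by maximality of $s$) forces $m - n \leq k$. For quasi-convexity at parameter $c$, consider a geodesic segment $\widetilde\gamma$ from $x$ to $y$ (with $d_S(x, \gamma_n), d_S(y, \gamma_m) \leq c$) whose steps are $t_0, \dots, t_{\ell-1} \in S$. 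Decomposing each $\pi_G(t_i)$ into components parallel and orthogonal to $\pi_G(s)$ in $(G/\tors G) \otimes \R$, maximality of $\|\pi_G(s)\|_2$ together with $\pi_G(y - x) = (m-n)\pi_G(s) + O(c)$ forces all but $O(c)$ indices $i$ to satisfy $\pi_G(t_i) = \pi_G(s)$ exactly (using discreteness of the finite set $\pi_G(S)$). Summing through position $j$ then bounds $\|\pi_G(\widetilde\gamma_j - \gamma_{n+j})\|_2$ by a constant depending only on $c$; since $\tors G$ is finite, sets of bounded $\|\pi_G\|_2$-diameter have bounded word-metric diameter, yielding the uniform $C(c)$.

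Part 2 is immediate: graph isomorphisms are isometries for the path metric, and quasi-convexity is a purely metric property.

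For part 3, quasi-convexity of $\gamma' := \varphi \circ \gamma$ follows from part 2, so the real work is verifying quasi-algebraicity. Set $t_n := \gamma'_{n+1} - \gamma'_n \in S'$, let $C := C(0)$ denote the quasi-convexity constant of $\gamma$ for $c = 0$, and $M' := \max_{t \in S'} \|\pi_{G'}(t)\|_2$. The key mechanism combines Proposition 2.3 with pullback: for every $N \in \N$ and every permutation $\sigma$ of $\{-N, \dots, N-1\}$, Proposition 2.3 in the Abelian group $G'$ yields that the reordered path from $\gamma'_{-N}$ to $\gamma'_N$ is again a geodesic segment; pulling it back by $\varphi^{-1}$ produces a geodesic segment in $\Cay G S$ from $\gamma_{-N}$ to $\gamma_N$, and quasi-convexity of $\gamma$ confines it to within $C$ of $\gamma$. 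By the isometry $\varphi$, the reordered path in $G'$ therefore remains within $C$ of $\gamma'$. Specialising to permutations that front-load all $a$ occurrences of a fixed $u \in S'$ among $(t_{-N}, \dots, t_{N-1})$ gives
\[
  \bigl\| \pi_{G'}(\gamma'_{-N} + ju) - \pi_{G'}(\gamma'_{-N+j}) \bigr\|_2 \leq C M'
  \qquad (0 \leq j \leq a).
\]

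The algebraic hypothesis $\gamma_n = \gamma_0 + n s$ enters through $\|\gamma'_N - \gamma'_{-N}\|_{S'} = 2N$ (via $\varphi$), which together with the displayed bound taken over various $u$ and $N \to \infty$ pins down a canonical asymptotic direction $L \in (G'/\tors G') \otimes \R$ for $\gamma'$: dividing by $j = a$ and letting $a \to \infty$ identifies $\pi_{G'}(u) = L$ for every value $u \in S'$ occurring infinitely often in $(t_n)$. The main obstacle I anticipate is upgrading this to the exact pointwise equality $\pi_{G'}(t_n) = L$ for every $n \in \Z$, including positions where $t_n$ is a value occurring only finitely often. I would address these rare values by considering reorderings that place a rare $u$ adjacent to long runs of an abundant value $v$ with $\pi_{G'}(v) = L$; the $O(1)$-proximity bound thus obtained, combined with the discreteness of the finite set $\pi_{G'}(S') \subset G'/\tors G'$, should then force $\pi_{G'}(u) = L$ exactly.
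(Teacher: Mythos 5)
Your parts~1 and~2 follow essentially the paper's route (the norm estimate for geodesicity, a projection/maximality argument bounding the number of steps of the wrong quasi-type by a constant depending only on $c$, and the remark that quasi-convexity is purely metric), and the first half of your part~3 is also sound: comparing each front-loaded reordering with $\gamma'$ itself, rather than comparing two reorderings with each other as the paper does, is a legitimate variant of the same mechanism, and it does show that any two values of $S'$ occurring infinitely often among the steps of $\gamma'=\varphi\circ\gamma$ have the same image under $\pi_{G'}$, hence that all but finitely many steps share one quasi-type.

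The gap is exactly the step you flag at the end, and your proposed remedy cannot close it. Quasi-convexity, even in the strong fellow-traveller form defined in the paper, cannot detect a single anomalous step: if one step has quasi-type $\pi_{G'}(t')\neq\pi_{G'}(s')$ and all others have quasi-type $\pi_{G'}(s')$, then every reordered segment differs from $\gamma'$, position by position, only by $\pm(t'-s')$ plus torsion, so all your proximity bounds hold with a fixed constant; placing the rare step next to long runs of an abundant one therefore only yields $\|\pi_{G'}(t')-\pi_{G'}(s')\|_2\leq O(C)$ with a constant that does not shrink, and discreteness of $\pi_{G'}(S')$ cannot upgrade a fixed bound to equality. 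Decisively: the only facts your part~3 actually uses are that reordered segments fellow-travel $\gamma'$ (quasi-convexity) and that $d_{S'}(\gamma'_{-N},\gamma'_N)=2N$ (geodesicity) -- note that your appeal to the algebraic hypothesis only delivers the latter, which holds for any geodesic line. Both facts hold for the line through $(0,0),(0,1)$ in $\Cay{\Z^2}{\{\pm(1,0),\pm(0,1)\}}$ (the paper's Figure~2, with $\varphi=\id$), which is a quasi-convex geodesic line with exactly one anomalous step and is not quasi-algebraic; so any argument built only from these ingredients proves a false statement. The missing idea is the paper's use of algebraicity of $\gamma$ through geodesic \emph{counting}: since $\gamma$ is algebraic and translations are isometries, the number $\Gamma(n,m)$ of geodesic segments from $\gamma_n$ to $\gamma_m$ depends only on $m-n$, and $\varphi$ transports these counts to $\gamma'$. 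If the step $\gamma'_1-\gamma'_0$ had quasi-type different from that of all later steps, shuffling it into the $k+1$ possible positions of the geodesics from $\gamma'_1$ to $\gamma'_{k+1}$ would give $(k+1)\cdot\Gamma(1,k+1)\leq\Gamma(0,k+1)$, while $\Gamma(0,k)\leq|S|^k$ guarantees some $k$ with $\Gamma(0,k+1)<k\cdot\Gamma(0,k)$; together with translation invariance of the counts this is a contradiction. Some argument of this kind, genuinely exploiting the translation invariance coming from $\gamma$ being algebraic, is what your sketch still needs.
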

\begin{proof}
  \emph{Ad~1:}
  Let $\gamma \colon \Z \longrightarrow G$ be a quasi-algebraic line of 
  quasi-type~$\pi_G(s)$. We argue similarly to the proof of the
  corresponding part of Proposition~\ref{prop:convalg}: 

  As a first step, we show that $\gamma$ is a geodesic line: To this
  end, let $n,m \in \Z$ with $n \leq m$, let $k :=
  d_S(\gamma_n,\gamma_m)$, and let $(g_0, \dots, g_k)$ be a geodesic
  segment in~$\Cay GS$ starting in~$\gamma_n$ and ending
  in~$\gamma_m$.  In particular, $g_{j+1} - g_j \in S$ for all~$j \in
  \{0, \dots, k-1\}$, and
  \begin{align*}
    (m-n) \cdot \|\pi_G(s)\|_2 
    & = \|\pi_G(\gamma_m - \gamma_n)\|_2 
      = \biggl\|\pi_G\biggl(\sum_{j=0}^{k-1} (g_{j+1} - g_j)\biggr)\biggr\|_2 
      \\
    & \leq \sum_{j=0}^{k-1} \|\pi_G(g_{j+1} - g_j)\|_2
      . 
  \end{align*}
  Because $s$ is $\|\pi_G(\cdot)\|_2$-maximal in~$S$, and 
  $\|\pi_G(s)\|_2 \neq 0$ it follows that
  \[ d_S(\gamma_n,\gamma_m) = k \geq m-n \geq d_S(\gamma_n,\gamma_m).\] 
  Hence, $(\gamma_n, \dots, \gamma_m)$ is a
  geodesic path in~$\Cay GS$, and it follows that $\gamma$ indeed is a
  geodesic line.

  Why is $\gamma$ quasi-convex?  Let $c \in \R_{\geq0}$ and let $n, m
  \in \Z$ with $n \leq m$. Moreover, let $x,y \in G$ with
  $d_S(x,\gamma_n) \leq c$ and $d_S(y, \gamma_m) \leq c$, let $k :=
  d_S(x,y)$, and let $(g_0, \dots, g_k)$ be a geodesic segment
  in~$\Cay GS$ starting in~$x$ and ending in~$y$.  In view of the
  triangle inequality and the fact that $\gamma$ is geodesic, we hence
  obtain that
  \[ m-n - 2 \cdot c \leq k \leq m - n + 2 \cdot c. 
  \]
  Let $K$ be the number of steps in the geodesic segment~$(g_0, \dots,
  g_k)$ that are \emph{not} of quasi-type~$\pi_G(s)$. We now bound $K$ 
  from above: We may assume that $K\neq 0$; then $S$ contains elements that  
  are not quasi-type~$\pm \pi_G(s)$.

  We embed~$G/\tors G$ (which we identified with~$\Z^{\rk G}$)
  into~$\R^{\rk G}$ and consider the standard scalar product
  on~$\R^{\rk G}$, which is compatible with our choice of
  $\ell^2$-norm on~$G/\tors G$. Let~$p \colon G \longrightarrow \R
  \cdot \pi_G(s) \subset \R^{\rk G}$ be the orthogonal projection onto
  the line in~$\R^{\rk G}$ spanned by~$\pi_G(s)$. Because $S$ is
  finite and $s$ is $\|\pi_G(\cdot)\|_2$-maximal in~$S$, we have
  \[ \mu := \max \bigl\{ \|p(t)\|_2 
                 \bigm| t \in S, \pi_G(t) \neq \pm \pi_G(s)
                 \bigr\}
          < \|\pi_G(s)\|_2. 
  \]
  Thus, $\|\pi_G(\cdot)\|_2$-maximality of~$s$ and the 
  triangle inequality yield
  \begin{align*}
    (m-n) \cdot \|\pi_G(s)\|_2 - 2 \cdot c \cdot \|\pi_G(s)\|_2 
    & \leq \| p(g_k) - p(g_0) \|_2 \\ 
    & \leq K \cdot \mu + (k-K) \cdot \|\pi_G(s)\|_2 \\
    & \leq K \cdot \mu + (m-n +2\cdot c-K) \cdot \|\pi_G(s)\|_2,
  \end{align*}
  and so 
  \[ K \leq \frac{4 \cdot c \cdot \|\pi_G(s)\|_2}
                 {\|\pi_G(s)\|_2 - \mu}, 
  \]
  which is the desired upper bound depending only on~$c$ and the
  geometry of~$S$.  

  Then the segment~$(g_0, \dots, g_k)$ is uniformly $C$-close
  to~$\gamma$, where
  \[ C := c +  \frac{8 \cdot c \cdot \|\pi_G(s)\|_2}
                 {\|\pi_G(s)\|_2 - \mu}
            + \diam_{d_S} \tors G
  \]
  (which depends only on~$c$ and the geometry of~$(G,S)$): If $j \in \{0,
  \dots, k\}$, then 
  \[ g_j - \gamma_{n+j} 
     \in g_0 - \gamma_n + \sum_{i = 1}^{K_j} s_i - K_j \cdot s +\tors G,
  \]
  where $s_1, \dots, s_{K_j} \in S$ are the steps in the
  segment~$(g_0, \dots, g_j)$ that are not of
  quasi-type~$\pi_G(s)$. Then $K_j \leq K$ and so
  \[ d_S(g_j, \gamma_{n+j}) \leq c + 2 \cdot K + \diam_{d_S}\tors G
     \leq C,
  \]
  as desired. 
  Hence, $\gamma$ is quasi-convex.

  \emph{Ad~2:} Every graph isomorphism is an isometry with respect to
  the graph metrics, and isometries map quasi-convex geodesic lines to
  quasi-convex geodesic lines. This proves the second part.

  \emph{Ad~3:} Let $\gamma \colon \Z \longrightarrow G$ be an
  algebraic quasi-convex geodesic line of type~$s$ for
  some~$s \in S$. By the second part, $\gamma' := \varphi
  \circ \gamma$ is a quasi-convex geodesic line in~$\Cay{G'}{S'}$. In
  order to show that $\gamma'$ is quasi-algebraic, we proceed as 
  follows:
  \begin{itemize}
    \item We will first show that all but a finite number of 
      edges in~$\gamma'$ have the same quasi-type.
    \item We will then conclude that the quasi-types of all the edges
      of~$\gamma'$ are the same.
  \end{itemize}

  For the first step, \emph{assume} for a contradiction that not all
  but a finite number of steps in~$\gamma'$ have the same
  type. Because $S'$ is finite, then there exist $s', t' \in S'$
  with~$\pi_{G'}(s') \neq \pi_{G'}(t')$ and the following property:
  for all~$k \in \N$ there is an~$n_k \in \N$ such that the geodesic
  segment~$(\gamma'_0, \dots, \gamma'_{n_k})$ contains at least
  $k$~steps of quasi-type~$\pi_{G'}(s')$ as well as at least $k$~steps of
  quasi-type~$\pi_{G'}(t')$. Because $G'$ is Abelian, the reordering 
  \begin{itemize}
    \item start in~$\gamma'_0$,
    \item then take $k$~steps of quasi-type~$\pi_{G'}(s')$,
    \item then take $k$~steps of quasi-type~$\pi_{G'}(t')$, 
    \item then take the remaining steps (ending in~$\gamma'_{n_k}$)
  \end{itemize}
  of this geodesic segment also is a geodesic segment~$\eta$ starting
  in~$\gamma'_{0}$ and ending in~$\gamma'_{n_k}$
  (Proposition~\ref{prop:reorder}). Similarly, we obtain a geodesic
  segment~$\widetilde \eta$ between~$\gamma'_0$ and~$\gamma'_{n_k}$
  that first takes $k$~steps of quasi-type~$\pi_{G'}(t')$ and then $k$~steps
  of quasi-type~$\pi_{G'}(s')$ (Figure~\ref{fig:notthin}). But then
  \[ \pi_{G'} (\eta_k - \widetilde\eta_k) 
     = k \cdot \bigl(\pi_{G'}(s') - \pi_{G'}(t')\bigr),
  \]
  and so $d_{S'}(\eta_k, \widetilde \eta_k)$ can become arbitrarily
  large (because $\pi_{G'}(s') \neq \pi_{G'}(t')$ and $\tors G'$ is
  finite).  This contradicts that $\gamma'$ is a quasi-convex geodesic
  line.

  \begin{figure}
    \begin{center}
      \begin{tikzpicture}[thick]
        \draw[black!40,decorate,decoration=zigzag] (0,0) -- (2,0);
        \draw[decorate,decoration=zigzag] (2,0) -- (4,0);
        \draw (0,0) -- (1,1) -- (2,0);
        \draw (0,0) -- (1,-1) -- (2,0);
        \gvertex{(0,0)}
        \gvertex{(1,1)}
        \gvertex{(1,-1)}
        \gvertex{(4,0)}
        \draw (3,0) node [anchor=north] {$\gamma'$};
        \draw (0.5,0.5) node[anchor= south east]{$\eta$};
        \draw (0.5,-0.5) node[anchor=north east]{$\widetilde\eta$};
        \draw (1,1) node[anchor=south west] {$\eta_k$};
        \draw (1,-1) node[anchor=north west] {$\widetilde\eta_k$};
      \end{tikzpicture}
    \end{center}
    
    \caption{Reordered geodesics that are not close}\label{fig:notthin}
  \end{figure}
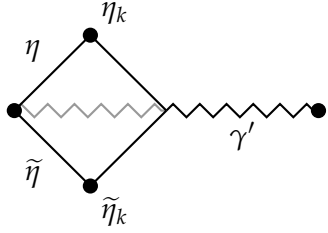

  For the second step, \emph{assume} for a contradiction that 
  not all steps of~$\gamma'$ have the same quasi-type. In view of 
  the first step and translation invariance of~$d_{S'}$ we 
  may assume that there are $s', t' \in S'$ with $\pi_{G'}(s') \neq 
  \pi_{G'}(t')$ and 
  \[ \pi_{G'}(\gamma'_{n+1} - \gamma'_n) = \pi_{G'}(s')  
  \]
  for all~$n \in \N_{>0}$, as well as
  \[ \pi_{G'}(\gamma'_1 - \gamma'_0) = \pi_{G'}(t'). 
  \]
  For $n,m \in \Z$ let $\Gamma(n,m)$ and $\Gamma'(n,m)$ denote 
  the number of geodesics in~$\Cay GS$ and $\Cay{G'}{S'}$ between 
  $\gamma_n$ and $\gamma_m$ and between $\gamma'_n$ and $\gamma'_m$ 
  respectively. Because $\gamma$ is \emph{algebraic}, because translations in~$G$ 
  are $d_S$-isometries and because $\varphi$ is an isometry with respect 
  to the word metrics~$d_S$ and $d_{S'}$, we obtain
  \[ \Gamma(0, m- n) = \Gamma(n,m) = \Gamma'(n,m)
  \]
  for all~$n,m \in \Z$. By definition of~$\Cay GS$, clearly
  \[ \Gamma(0,k) \leq |S|^k 
  \]
  for all~$k \in \N$. Hence there is a~$k \in \N$ with 
  $\Gamma(0,k+1) < k \cdot \Gamma(0,k)$. On the other hand, 
  shuffling in the step~$\gamma'_1 - \gamma'_0$ at the various 
  positions of the geodesic segments between~$\gamma'_1$ and~$\gamma'_{k+1}$
  produces geodesic segments (by Proposition~\ref{prop:reorder}), 
  which are all different because~$\pi_{G'}(t') \neq \pi_{G'}(s')$; 
  hence
  \[ (k+1) \cdot \Gamma'(1,k+1) \leq \Gamma'(0,k+1).  
  \]
  Combining these estimates, we obtain
  \begin{align*} 
    (k+1) \cdot \Gamma(0,k) 
     & = (k+1) \cdot \Gamma(1,k+1) = (k+1) \cdot \Gamma'(1,k+1)\\ 
     & \leq \Gamma'(0,k+1) = \Gamma(0,k+1)\\ 
     & < k \cdot \Gamma(0,k),
  \end{align*}
  which is a contradiction. 

  Therefore all steps of~$\gamma'$ have the same quasi-type, and 
  so $\gamma'$ is indeed a quasi-algebraic line in~$\Cay{G'}{S'}$.
\end{proof}

\subsection{Parallelism}\label{subsec:parallelism}

We now discuss two basic properties of parallel quasi-algebraic
lines. On the one hand, we show that graph isomorphisms cannot map
quasi-algebraic lines of the same quasi-type to quasi-algebraic
lines of different quasi-types; on the other hand, we show that all
quasi-algebraic lines parallel to quasi-algebraic (quasi-convex)
geodesic lines must also be (quasi-convex) geodesic.

\begin{prop}[parallel lines and quasi-type]\label{prop:paralleltype}
  Let $G$ and $G'$ be finitely generated Abelian groups, let $S
  \subset G$ and $S' \subset G'$ be symmetric finite generating sets.  Let $s
  \in S$ and let $\gamma, \eta \colon \Z \longrightarrow G$ be
  quasi-algebraic lines in~$\Cay GS$ of
  quasi-type~$\pi_G(s)$.  Suppose that there is a graph
  isomorphism~$\varphi \colon \Cay GS \longrightarrow \Cay{G'}{S'}$
  such that $\gamma' := \varphi \circ \gamma$ and $\eta' := \varphi
  \circ \eta$ are quasi-algebraic lines in~$\Cay
        {G'}{S'}$. Then $\gamma'$ and $\eta'$ have the same
        quasi-type.
\end{prop}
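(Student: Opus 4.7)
The plan is to exploit the fact that two quasi-algebraic lines of the same quasi-type in a Cayley graph of a finitely generated Abelian group must stay within bounded distance of each other, and then to transfer this bounded distance via the isometry $\varphi$ to the images $\gamma'$ and $\eta'$; bounded distance on the image side will then force equal quasi-type.

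First I would establish the bounded-distance lemma on the $G$-side. Since $\gamma$ and $\eta$ both have quasi-type~$\pi_G(s)$, for every~$n \in \Z$ the element $(\gamma_n - \eta_n) - (\gamma_0 - \eta_0)$ projects to $n \cdot \pi_G(s) - n \cdot \pi_G(s) = 0$ under~$\pi_G$, hence lies in~$\tors G$. Therefore $\gamma_n - \eta_n$ belongs to the finite set~$\gamma_0 - \eta_0 + \tors G$, and translation invariance of $d_S$ gives
\[
   d_S(\gamma_n, \eta_n) \leq \diam_{d_S}\tors G + d_S(\gamma_0, \eta_0) =: D
\]
for all~$n \in \Z$. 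Since $\varphi$ is an isometry with respect to the word metrics~$d_S$ and~$d_{S'}$ (by Section~\ref{subsec:geodesics}), we obtain~$d_{S'}(\gamma'_n, \eta'_n) \leq D$ for all~$n \in \Z$ as well.

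For the second step, let $s', t' \in S'$ be elements witnessing the quasi-types of~$\gamma'$ and $\eta'$ respectively, so that $\pi_{G'}(\gamma'_{n+1} - \gamma'_n) = \pi_{G'}(s')$ and $\pi_{G'}(\eta'_{n+1} - \eta'_n) = \pi_{G'}(t')$ for all~$n \in \Z$. Then, as above,
\[
   \pi_{G'}(\gamma'_n - \eta'_n) - \pi_{G'}(\gamma'_0 - \eta'_0)
   = n \cdot \bigl(\pi_{G'}(s') - \pi_{G'}(t')\bigr)
\]
in the finitely generated free Abelian group~$G'/\tors G'$. Now I would use the elementary estimate, valid for any finite generating set~$S'$, that
\[
   \|\pi_{G'}(x - y)\|_2 \leq M \cdot d_{S'}(x,y),
   \qquad M := \max_{u' \in S'} \|\pi_{G'}(u')\|_2,
\]
which follows by tracing a $d_{S'}$-geodesic and applying the triangle inequality in~$\R^{\rk G'}$. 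Combining this with the bound $d_{S'}(\gamma'_n, \eta'_n) \leq D$ forces $\|n \cdot (\pi_{G'}(s') - \pi_{G'}(t'))\|_2$ to remain uniformly bounded in~$n$. Since $G'/\tors G'$ is torsion-free, this is only possible if $\pi_{G'}(s') = \pi_{G'}(t')$, which is exactly the claim.

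The main obstacle is purely conceptual, namely recognising the two-sided bounded-distance argument; once that is in place, the computations are immediate and use only that torsion is finite together with the Lipschitz estimate relating the word metric~$d_{S'}$ and the Euclidean geometry on~$G'/\tors G'$.
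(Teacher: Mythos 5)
Your proof is correct and follows essentially the same route as the paper: both arguments rest on the observation that $\gamma_n$ and $\eta_n$ stay at uniformly bounded $d_S$-distance because their difference lies in a fixed torsion coset, transfer this bound through the isometry $\varphi$, and then derive a contradiction from the linear growth of $n\cdot(\pi_{G'}(s')-\pi_{G'}(t'))$. The only cosmetic difference is in the last step, where the paper stays in the word metric and uses finiteness of $d_{S'}$-balls to conclude $s'-t'\in\tors G'$, while you pass to an $\ell^2$-norm on $G'/\tors G'$ and use positive definiteness; both are valid.
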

\begin{proof}
  Let $s', t' \in S'$ be chosen in such a way that
  $\pi_{G'}(s')$ and $\pi_{G'}(t')$ are the types of~$\gamma'$ and
  $\eta'$ respectively.  We now have to show that $\pi_{G'}(s') =
  \pi_{G'}(t')$, i.e, that $s' - t'$ lies in~$\tors
  G'$.

  In view of the definition of quasi-algebraic lines and by 
  rearranging the torsion contributions, we find 
  maps~$h, k \colon \Z \longrightarrow \tors G$,
  $h', k' \colon \Z \longrightarrow \tors G'$
  such that for all~$n\in \Z$ we have
  \begin{align*}
    \gamma_n & = \gamma_0 + n \cdot s + h_n \\
    \eta_n & = \eta_0 + n \cdot s + k_n \\
    \gamma'_n & = \gamma'_0 + n \cdot s' + h'_n \\
    \eta'_n & = \eta'_0 + n \cdot t' + k'_n. 
  \end{align*}
  Because $\varphi$ and all translations are isometries we obtain for
  all~$n \in \Z$ that
  \begin{align*}
    d_{S'}\bigl(n \cdot (s' - t'),\; &\ \eta'_0-\gamma'_0 + k'_n - h'_n\bigr)\\
    & =  d_{S'}(\gamma'_0 + n\cdot s' + h'_n,\; \eta'_0 +n \cdot t' + k'_n) \\
    & =  d_{S'}( \gamma'_n ,\; \eta'_n ) \\
    & =  d_{S'}\bigl( \varphi(\gamma_n) ,\; \varphi(\eta_n) \bigr) \\
    & =  d_S (\gamma_n ,\; \eta_n) \\
    & =  d_S (\gamma_0 + n\cdot s + h_n,\; \eta_0 +n \cdot s + k_n) \\
    & =  d_S (\gamma_0 - \eta_0,\; k_n - h_n), 
  \end{align*}
  and hence the triangle inequality (and translation invariance) yields
  \begin{align*}
    d_{S'} \bigl(n \cdot (s' - t'),\; \eta'_0 - \gamma'_0\bigr)
    & \leq d_S (\gamma_0 - \eta_0,\; k_n - h_n) 
    + d_{S'}(k'_n - h'_n,\; 0).
  \end{align*}
  Because $\tors G$ and $\tors G'$ are finite sets, the right hand side is 
  bounded independently of~$n$. Therefore, $\{ n \cdot (s' -t')\mid n \in \Z\}$ 
  lies in a $d_{S'}$-ball of finite radius around~$\eta'_0-\gamma'_0$. 
  Moreover, as $S'$ is finite, this ball is a finite set. 
  Thus, $s' - t' \in \tors G'$, as desired.
\end{proof}

\begin{prop}[parallel lines stay geodesics]\label{prop:parallelgeod}
  Let $G$ be a finitely generated Abelian group, let $S \subset G$ 
  be a symmetric finite generating set, and let $s \in S$.
  \begin{enumerate}
    \item\label{stm:geod} 
      If one quasi-algebraic line in~$\Cay GS$ of
      quasi-type~$\pi_G(s)$ is geodesic, then all quasi-algebraic
      lines in~$\Cay GS$ of quasi-type~$\pi_G(s)$ are geodesic lines.
    \item\label{stm:qconvex} 
      If one quasi-algebraic line in~$\Cay GS$ of
      quasi-type~$\pi_G(s)$ is a quasi-convex geodesic line, 
      then all quasi-algebraic lines in~$\Cay GS$ of 
      quasi-type~$\pi_G(s)$ are quasi-convex geodesic lines.
  \end{enumerate}
\end{prop}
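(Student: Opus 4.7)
The plan is to reduce the geodesicity of every quasi-algebraic line of quasi-type~$\pi_G(s)$ to one intrinsic property of~$\pi_G(s)$ inside the free Abelian quotient~$\bar G := G/\tors G$, a property that refers to neither $\gamma$ nor any other particular line. For any quasi-algebraic line~$\eta$ of quasi-type~$\pi_G(s)$ and indices $n \leq m$, the segment $(\eta_n, \dots, \eta_m)$ exhibits the bound $d_S(\eta_n, \eta_m) \leq m-n$, while the projection~$\pi_G \colon \Cay GS \longrightarrow \Cay{\bar G}{\pi_G(S)}$ is distance non-increasing, so
\[
  d_{\pi_G(S)}\bigl(0,(m-n)\cdot \pi_G(s)\bigr) \leq d_S(\eta_n, \eta_m) \leq m-n.
\]
Hence part~\enumref{stm:geod} will follow once I show that the existence of one geodesic quasi-algebraic line~$\gamma$ forces
\[
  d_{\pi_G(S)}\bigl(0, k \cdot \pi_G(s)\bigr) = k \qquad \text{for every } k \in \N.
\]

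To establish this intrinsic equality I argue by contradiction: suppose $d_{\pi_G(S)}(0, k_0 \cdot \pi_G(s)) \leq k_0 - c$ for some $k_0 \in \N$ with $c \geq 1$. Subadditivity of~$d_{\pi_G(S)}$ (triangle inequality plus translation invariance) then yields $d_{\pi_G(S)}(0, n k_0 \cdot \pi_G(s)) \leq n(k_0-c)$ for every~$n \in \N$. Lift such a $\pi_G(S)$-geodesic step by step to an $S$-path in~$\Cay GS$ starting at~$\gamma_0$: its endpoint projects to $\pi_G(\gamma_{n k_0})$ and hence differs from~$\gamma_{n k_0}$ by some $\delta \in \tors G$, which can be corrected using at most $\diam_{d_S}\tors G$ further steps. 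Therefore
\[
  d_S(\gamma_0, \gamma_{n k_0}) \leq n(k_0-c) + \diam_{d_S}\tors G,
\]
which for $n$ large enough is strictly less than $n k_0 = d_S(\gamma_0, \gamma_{n k_0})$, contradicting that $\gamma$ is geodesic.

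For part~\enumref{stm:qconvex}, part~\enumref{stm:geod} already ensures that $\eta$ is geodesic. Writing $\gamma_n = \gamma_0 + n\cdot s + h_n$ and $\eta_n = \eta_0 + n\cdot s + k_n$ with $h_n, k_n \in \tors G$, the triangle inequality yields the uniform fellow-traveling estimate $d_S(\gamma_n, \eta_n) \leq d_S(0, \eta_0-\gamma_0) + \diam_{d_S}\tors G =: B$. Given $c \geq 0$ and points $x, y$ with $d_S(x, \eta_n), d_S(y, \eta_m) \leq c$, one has $d_S(x, \gamma_n), d_S(y, \gamma_m) \leq c + B$, so the quasi-convexity of $\gamma$ at scale~$c + B$ produces a $C'$ such that every geodesic segment~$\widetilde\gamma$ from~$x$ to~$y$ satisfies $d_S(\widetilde\gamma_j, \gamma_{n+j}) \leq C'$. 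Combining once more with the fellow-traveling bound gives $d_S(\widetilde\gamma_j, \eta_{n+j}) \leq C' + B =: C$, which depends only on~$c$ and the geometry of~$(G, S, \gamma, \eta)$; hence $\eta$ is quasi-convex.

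I expect the asymptotic lifting argument in part~\enumref{stm:geod} to be the main obstacle: a local failure of the intrinsic equality at a single~$k_0$ has to be amplified via subadditivity along the multiples~$n k_0$ so that the fixed torsion correction~$\diam_{d_S}\tors G$ becomes negligible compared to the cumulative savings~$n c$. The remaining pieces---the projection estimate, the fellow-traveling bound, and the deduction of~\enumref{stm:qconvex} from~\enumref{stm:geod}---amount to bookkeeping with the triangle inequality.
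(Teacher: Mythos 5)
Your argument is correct; part~\enumref{stm:qconvex} coincides with the paper's (uniform fellow-travelling of same-quasi-type lines via translation invariance and finiteness of~$\tors G$, then transferring the quasi-convexity constant), but part~\enumref{stm:geod} takes a genuinely different route. The paper argues directly with two lines: from a shortcut $d_S(\eta_n,\eta_m)\leq m-n-1$ it builds a \emph{periodic} quasi-algebraic line $\widetilde\eta$ of the same quasi-type, observes that $\gamma$ and $\widetilde\eta$ stay uniformly close, and derives $k(m-n)\leq k(m-n-1)+\mathrm{const}$, a contradiction for large~$k$. You instead factor through the quotient graph $\Cay{G/\tors G}{\pi_G(S)}$ and isolate the intrinsic criterion $d_{\pi_G(S)}(0,k\cdot\pi_G(s))=k$ for all $k\in\N$, which is manifestly independent of any particular line: the $1$-Lipschitz projection gives the lower bound $d_S(\eta_n,\eta_m)\geq d_{\pi_G(S)}(0,(m-n)\pi_G(s))$ for every quasi-algebraic line, and a violation of the criterion is amplified by subadditivity along multiples $nk_0$ and lifted back to~$G$ at the cost of a single bounded torsion correction, contradicting geodesicity of the given~$\gamma$. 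Both proofs are asymptotic amplification arguments in which a saving of at least~$1$ per period eventually swamps the bounded error $\diam_{d_S}\tors G$; what your version buys is a clean reusable statement (a quasi-algebraic line of quasi-type $\pi_G(s)$ is geodesic if and only if $k\mapsto k\cdot\pi_G(s)$ is a geodesic ray in the quotient graph), at the price of introducing the auxiliary Cayley graph of $G/\tors G$ with respect to $\pi_G(S)$ and checking that the lift of a quotient geodesic is a legitimate $S$-path (which holds since every nonzero step of the quotient geodesic comes from a non-torsion element of the symmetric set~$S$). The two minor points you leave implicit --- that $\diam_{d_S}\tors G$ bounds $d_S(0,\delta)$ for every $\delta\in\tors G$ because $\tors G$ is a subgroup, and that the case $\pi_G(s)=0$ is vacuous since no quasi-algebraic line of quasi-type $0$ can be geodesic --- are both immediate.
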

\begin{proof}
  \emph{Ad~\ref{stm:geod}:} 
  \emph{Assume} for a contradiction that there exist 
  quasi-algebraic lines $\gamma, \eta \colon \Z \longrightarrow G$ 
  of quasi-type~$\pi_G(s)$ such that $\gamma$ is geodesic but $\eta$ is not. 
  Hence, there exist~$n,m \in \Z$ with $n \leq m$ and
  \[ d_S(\eta_n, \eta_m) \leq m - n -1. 
  \]
  Out of~$\eta$ we now construct the ``periodic'' quasi-algebraic
  line in~$\Cay GS$ of quasi-type~$\pi_G(s)$ given by
  \begin{align*}
    \widetilde \eta \colon \Z & \longrightarrow G \\
    z & \longmapsto \Bigl\lfloor \frac z{m-n}\Bigr\rfloor \cdot (\eta_m -\eta_n) 
                  + \eta_{n + z \mathbin{\;\text{mod}\;} (m-n)}.
  \end{align*}
  Because $\gamma$ and $\widetilde \eta$ both are of quasi-type~$\pi_G(s)$,
  we have $\gamma_k - \widetilde\eta_k \in \gamma_0 - \widetilde\eta_0 + \tors G$
  for all~$k \in \Z$. Adding the facts that $d_S$ is translation invariant 
  and that $\tors G$ is finite 
  we obtain 
  \[ d_S (\gamma_k, \widetilde\eta_k) 
     \leq d_S(\gamma_0, \widetilde\eta_0) + \diam_{d_S} \tors G
  \]
  for all~$k \in \Z$. By the assumption that $\gamma$ is geodesic and the 
  triangle inequality we therefore get 
  \begin{align*}
    k \cdot (m-n) 
    & = d_S(\gamma_0, \gamma_{k \cdot (m-n)}) \\
    & \leq d_S(\gamma_0, \widetilde\eta_0) 
         + d_S(\widetilde\eta_0, \widetilde\eta_{k \cdot (m-n)})
         + d_S(\widetilde\eta_{k \cdot (m-n)}, \gamma_{k \cdot (m-n)})\\
    & \leq d_S(\gamma_0,\widetilde\eta_0)
         + d_S(0, k \cdot (\eta_m - \eta_n)) 
         + d_S(\gamma_0, \widetilde\eta_0)
           + \diam_{d_S} \tors G\\
    & \leq k \cdot d_S(\eta_n, \eta_m) + 2 \cdot d_S(\gamma_0, \widetilde\eta_0)
           + \diam_{d_S} \tors G \\
    & \leq k \cdot (m-n-1) + 2 \cdot d_S(\gamma_0, \widetilde\eta_0)
           + \diam_{d_S} \tors G
  \end{align*}
  for all~$k \in \Z$, which leads to a contradiction for large
  enough~$k$. So if $\gamma$ is geodesic, then also $\eta$ must be
  geodesic.

  \emph{Ad~\ref{stm:qconvex}:} Any two quasi-algebraic lines 
  of the same quasi-type stay uniformly close (because the 
  metric~$d_S$ is translation invariant and $\tors G$ has 
  finite diameter). By definition, any geodesic line 
  uniformly close to a quasi-convex geodesic is itself quasi-convex. 
  Therefore, the second part is a consequence of the first part.
\end{proof}

\section{Proof of Cayley graph rigidity}\label{sec:proofthm}

Using the properties of quasi-algebraic and quasi-convex geodesic
lines established in the previous section, we now prove
Theorem~\ref{mainthm}:

\begin{proof}[Proof (of Theorem~\ref{mainthm})]
  Of course, we may assume that $S$ and $S'$ are symmetric; moreover, 
  because translations induce isomorphisms on the corresponding 
  Cayley graphs, we can assume that $\varphi(0) = 0$. We prove 
  the theorem by induction on~$S$.

  For the \emph{base case}, we suppose that $S$ consists of torsion
  elements (in particular, this contains the case~$S = \emptyset$
  and~$G=\{0\}$). In this case, $G = \tors G$, and the theorem clearly 
  holds.

  For the \emph{induction step} we now may assume that $S$ contains at
  least one non-torsion element and that the theorem holds for all
  subgroups generated by proper subsets of~$S$. We choose a basis
  of~$G /\tors G \cong \Z^{\rk G}$ and consider the corresponding
  $\ell^2$-norm on~$G/\tors G$. Let $s \in S$ be a
  $\|\pi_G(\cdot)\|_2$-maximal element of~$S$; because $S$ contains
  non-torsion elements, $s \not\in \tors G$.  Let us fix some
  notation: We write $s' := \varphi(s)$ and
  \begin{align*} 
    S(s) & := \bigl\{ t \in S 
             \bigm| \pi_G(t) = \pi_G(s) \bigr\},
             \\
     S'(s') & := \bigl\{ t' \in S' 
             \bigm| \pi_G(t') = \pi_G(s') \bigr\}, 
  \end{align*}
  as well as
  \[ S_s := S \setminus \bigl(S(s) \cup -S(s)\bigr)
     , \qquad
     S'_{s'} := S' \setminus \bigl(S'(s') \cup -S'(s')\bigr).
  \]
  We then let $G_s$ and $G'_s$ be the subgroups of~$G$ and~$G'$ 
  generated by~$S_s$ and $S'_{s'}$ respectively.
  
  We will now proceed in the following steps:
  \begin{enumerate}
    \item\label{st:lines} We show that $\varphi$ turns
      algebraic lines of type in~$S(s)$ into
      quasi-algebraic lines of quasi-type~$\pi_{G'}(s')$; in
      particular, we obtain for all~$y \in G$, all~$t \in S(s)$ and all~$m \in
      \Z$ that
      \[   \pi_{G'} \circ \varphi(y + m \cdot t) 
         = \pi_{G'} \circ \varphi(y) + m\cdot \pi_{G'}(s'). 
      \]
    \item\label{st:removes}
      We show that $\varphi$ induces an isomorphism~$\varphi_s$ between
      $\Cay{G_s}{S_s}$ and~$\Cay{G'_{s'}}{S'_{s'}}$.
    \item\label{st:ind} We apply the induction hypothesis to~$\varphi_s$.
    \item\label{st:additive} 
      Using the knowledge about $\varphi_s$ and algebraic 
      lines of type in~$S(s)$ we show that 
      \[ \pi_{G'} \circ \varphi \colon G \longrightarrow G'/\tors G' 
      \]
      is additive.
    \item \label{st:conclude}
      We conclude that the map
      \begin{align*}
        \overline\varphi \colon G/\tors G& \longrightarrow G'/\tors G'\\
        [g] & \longmapsto [\varphi(g)]
      \end{align*}
      is indeed well-defined and that $\overline \varphi$ is an affine 
      isomorphism.
  \end{enumerate}

  \emph{Ad~\ref{st:lines} (quasi-algebraic lines):} Because $s$ is
  $\|\pi_G(\cdot)\|_2$-maximal and $s \not\in \tors G$, quasi-algebraic lines
  of quasi-type~$\pi_G(s)$ in~$G$ are quasi-convex geodesic lines
  (Proposition~\ref{prop:qconvalg}~(1)). As $\varphi$ is a graph
  isomorphism, $\varphi$ therefore maps algebraic lines of any type
  in~$S(s)$ to quasi-algebraic quasi-convex geodesic lines
  (Proposition~\ref{prop:qconvalg}~(3)), which all are of the same
  quasi-type (Proposition~\ref{prop:paralleltype}). Looking at the
  algebraic line of type~$s$ through~$0$ shows that they all have 
  quasi-type~$\pi_{G'}(s')$ .

  \emph{Ad~\ref{st:removes} (reduced Cayley graphs):} We first show
  that $\varphi$ yields a graph isomorphism~$\Cay G{S_s}
  \longrightarrow \Cay{G'}{S_{s'}}$: In view of the first part it
  suffices to show that \emph{all} algebraic lines in~$G'$ of type
  in~$S'(s')$ are indeed $\varphi$-images of quasi-algebraic lines
  in~$G$ of quasi-type~$\pi_G(s)$. By the first step, at least one
  quasi-algebraic line in~$G'$ of quasi-type~$\pi_{G'}(s')$ is a
  quasi-convex geodesic line. Thus, by parallelism
  (Proposition~\ref{prop:parallelgeod}), all quasi-algebraic lines of
  quasi-type~$\pi_{G'}(s')$ in~$G'$ are quasi-convex geodesic lines.
  
  Let $\psi \colon \Cay{G'}{S'} \longrightarrow \Cay GS$ be the graph
  isomorphism inverse to~$\varphi$. The previous paragraph shows that
  we can apply Proposition~\ref{prop:qconvalg}~(3) to deduce that
  $\psi$ maps algebraic lines of type in~$S'(s')$ to
  quasi-algebraic lines in~$G$. Again, parallelism shows that these 
  quasi-algebraic lines in~$G$ must all be of quasi-type~$\pi_G(s)$ 
  (Proposition~\ref{prop:paralleltype}). 

  Therefore, $\varphi$ induces a graph isomorphism~$\Cay {G}{S_s}
  \longrightarrow \Cay{G'}{S'_{s'}}$. This graph isomorphism maps
  connected components to connected components; the connected
  component of~$0$ in~$\Cay G{S_s}$ is~$\Cay{G_s}{S_s}$, and the
  connected component of~$\varphi(0) = 0$ in~$\Cay{G'}{S'_{s'}}$
  is~$\Cay{G'_{s'}}{S'_{s'}}$. Hence, $\varphi$ induces a graph 
  isomorphism~$\Cay{G_s}{S_s} \longrightarrow \Cay{G'_{s'}}{S'_{s'}}$.

  \emph{Ad~\ref{st:ind} (induction):} By construction, $G_s$, $G'_{s'}$ are 
  finitely generated Abelian groups, generated by~$S_s$ and $S'_{s'}$ 
  respectively. By induction, the map
  \begin{align*}
    \overline\varphi_s \colon G_s/\tors G_s 
    & \longrightarrow G'_{s'} / \tors G'_{s'}\\
    [g] & \longmapsto [\varphi_s(g)]
  \end{align*}
  is well-defined and an affine isomorphism. Because of
  $\varphi(0)=0$, it follows that $\overline\varphi_s(0) = 0$ and so
  $\overline \varphi_s$ is a group isomorphism. In particular, also
  the map~$\pi_{G'_{s'}} \circ \varphi_s = \overline \varphi_s \circ
  \pi_{G_s}$ is a group homomorphism; hence, also $\pi_{G'} \circ \varphi_s$ is
  additive.

  \emph{Ad~\ref{st:additive} (additivity):} 
  Let $x, \widetilde x \in G$. We can write $x$ and $\widetilde x$ in 
  the form
  \[ x = y + \sum_{j=1}^k m_j \cdot s_j
     , \qquad
     \widetilde x 
     = \widetilde y 
     + \sum_{j=1}^{\widetilde k} \widetilde m_j \cdot \widetilde s_j,
  \]
  where $y, \widetilde y \in G_s$, and $k, \widetilde k \in \N$,
  $m_1, \dots, m_k, \widetilde m_1,
  \dots, \widetilde m_{\widetilde k} \in \Z$, as well 
  as $s_1, \dots, s_k, \widetilde s_1, \dots, \widetilde s_{\widetilde k} \in S(s)$. 
  In view of the first step we obtain
  \begin{align*}
    \pi_{G'} \circ \varphi (x) 
    & = \pi_{G'} \circ \varphi(y) + \sum_{j=1}^k m_j \cdot \pi_{G'}(s')
    ,\\
    \pi_{G'} \circ \varphi (\widetilde x) 
    & = \pi_{G'} \circ \varphi(\widetilde y) 
      + \sum_{j=1}^{\widetilde k} \widetilde m_j \cdot \pi_{G'}(s')
    ,\\
    \pi_{G'} \circ \varphi (x + \widetilde x) 
    & = \pi_{G'} \circ \varphi(y + \widetilde y) 
    + \sum_{j=1}^k m_j \cdot \pi_{G'}(s') 
    + \sum_{j=1}^{\widetilde k} \widetilde m_j \cdot \pi_{G'}(s').
  \end{align*}
  Because $y, \widetilde y \in G_s$ and because $\pi_{G'} \circ \varphi_s$ 
  is additive by the third step, we conclude that
  \begin{align*}
    \pi_{G'} \circ \varphi (x + \widetilde x) 
    & = \pi_{G'} \circ \varphi(y) + \pi_{G'}\circ \varphi(\widetilde y) 
    + \sum_{j=1}^k m_j \cdot \pi_{G'}(s') 
    + \sum_{j=1}^{\widetilde k} \widetilde m_j \cdot \pi_{G'}(s')
    \\
    & = \pi_{G'} \circ \varphi (x) + \pi_{G'} \circ \varphi(\widetilde x),
  \end{align*}
  as desired.

  \emph{Ad~\ref{st:conclude} (affine isomorphism):} By the previous
  step, $\pi_{G'} \circ \varphi \colon G \longrightarrow G'/\tors G'$
  is additive. Because $\varphi(0) = 0$ it follows that $\pi_{G'}
  \circ \varphi$ is a group homomorphism. Hence, $\pi_{G'} \circ \varphi$ 
  maps $\tors G$ to~$\tors(G' / \tors G') = \{0\}$. Therefore, 
  $\overline \varphi$ is well-defined, and a group homomorphism. 

  Applying the same arguments to the graph isomorphism inverse to~$\varphi$ 
  we see that the affine homomorphism~$\overline \varphi$ has an affine 
  inverse, and so is an affine isomorphism.

  This completes the proof of Theorem~\ref{mainthm}.
\end{proof}

The following example shows that in the theorem it is essential that
we divide out the torsion part: In general, not every isomorphism
between Cayley graphs of finitely generated Abelian groups is induced
from an affine isomorphism between the groups:

\begin{nonexa}\label{counterexample}
  We consider the graph~$\Cay {\Z \times \Z/2}{\{\pm(1,0), \pm(1,1)\}}$: 
  For any~$n \in \Z$ the flip at position~$n$, i.e., the 
  map
  \begin{align*}
    \Z \times\Z/2 & \longrightarrow \Z \times \Z/2 \\
    (x,y) & \longmapsto 
    \begin{cases}
      (x,y) & \text{if $x \neq n$} \\
      (x, 1-y) & \text{if $x = n$},
    \end{cases}
  \end{align*}
  is a graph automorphism of~$\Cay {\Z \times \Z/2}{\{\pm(1,0), \pm(1,1)\}}$
  (Figure~\ref{fig:flip}), 
  but this graph automorphism is not induced by an affine isomorphism 
  of~$\Z \times \Z/2$ (even though $\Z$ and $\Z/2$ are CI-groups). 
\end{nonexa}

\begin{figure}
  \begin{center}
    \begin{tikzpicture}[thick]
      \begin{scope}
      \clip (-2.3,-0.5) rectangle +(4.6,2);
      \draw (-3,0) -- (3,0);
      \draw (-3,1) -- (3,1);
      \draw (-3,1) -- (-2,0) -- (-1,1) -- (0,0) -- (1,1) -- (2,0) -- (3,1);
      \draw (-3,0) -- (-2,1) -- (-1,0) -- (0,1) -- (1,0) -- (2,1) -- (3,0);
      \gvertex{(-2,0)}
      \gvertex{(-1,0)}
      \gvertex{(0,0)}
      \gvertex{(1,0)}
      \gvertex{(2,0)}
      \gvertex{(-2,1)}
      \gvertex{(-1,1)}
      \gvertex{(0,1)}
      \gvertex{(1,1)}
      \gvertex{(2,1)}
      \end{scope}
      \begin{scope}[shift={(4.7,0)}]
        \draw (0,0) -- (1,0);
        \draw (0,1) -- (1,1);
        \draw[dashed] (0,0) -- (1,1);
        \draw[dashed] (0,1) -- (1,0);
        \gvertex{(0,0)}
        \gvertex{(1,0)}
        \gvertex{(0,1)}
        \gvertex{(1,1)}
        \draw (0,0) node[anchor=east] {$a\;$};
        \draw (1,0) node[anchor=west] {$\;b$};
        \draw (1,1) node[anchor=west] {$\;b'$};
        \draw (0,1) node[anchor=east] {$a'\;$};
        \draw (2.5,0.5) node {$\rightsquigarrow$};
        \draw (2.5,0.7) node[anchor=south] {\emph{flip}};
        \begin{scope}[shift={(4,0)}]
          \draw[dashed] (0,0) -- (1,0);
          \draw[dashed] (0,1) -- (1,1);
          \draw (0,0) -- (1,1);
          \draw (0,1) -- (1,0);
          \gvertex{(0,0)}
          \gvertex{(1,0)}
          \gvertex{(0,1)}
          \gvertex{(1,1)}
        \draw (0,0) node[anchor=east] {$a\;$};
        \draw (1,0) node[anchor=west] {$\;b'$};
        \draw (1,1) node[anchor=west] {$\;b$};
        \draw (0,1) node[anchor=east] {$a'\;$};
        \end{scope}
      \end{scope}
    \end{tikzpicture}
  \end{center}

  \caption{The graph $\Cay {\Z \times \Z/2}{\{\pm(1,0), \pm(1,1)\}}$ 
  and the effect of flipping}\label{fig:flip}
\end{figure}
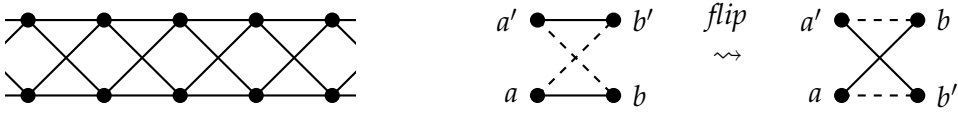

\section{Extracting the size of the torsion part}\label{sec:proofcor}

We will now deduce Corollary~\ref{maincor} from Theorem~\ref{mainthm}:

\begin{proof}[Proof (of Corollary~\ref{maincor})]
  Let $G$ and $G'$ be finitely generated Abelian groups with 
  $\rk G = \rk G'$ and $|\tors G| = |\tors G'|$. We write
  \[ d:= \rk G = \rk G'
     \quad\text{and}\quad
     k := |\tors G| = |\tors G'|
  .\]
  Let $S$ and $S'$ be $\Z$-bases of $G/\tors G \cong \Z^{d}$ and
  $G'/\tors G' \cong \Z^{d}$ respectively. Because of $G \cong \Z^d
  \times \tors G$ and $G' \cong \Z^d \times \tors G'$ we have
  \[ \Cay{G}{ S \cup \tors G}
     \cong \Cay {\Z^d} {\{e_1, \dots, e_d\}} \mathbin{\square} K_k
     \cong \Cay{G'}{S' \cup \tors G'},
  \]
  where $(e_1, \dots, e_d)$ is the standard basis of~$\Z^d$, where
  $K_k$ is the complete graph on $k$~vertices, and ``$\mathbin{\square}$''
  denotes the Cartesian product of graphs. So $G$ and $G'$ admit
  isomorphic Cayley graphs.

  Conversely, let $G$ and $G'$ be finitely generated Abelian groups
  that admit isomorphic Cayley graphs, i.e., there are finite
  generating sets~$S \subset G$, $S' \subset G'$ such that there is an
  isomorphism~$\varphi \colon \Cay GS \longrightarrow
  \Cay{G'}{S'}$. By Theorem~\ref{mainthm}, the graph
  isomorphism~$\varphi$ induces an affine isomorphism $\Z^{\rk G}
  \cong G /\tors G \longrightarrow G'/\tors G' \cong \Z^{\rk G'}$.

  Because affine isomorphisms between finitely generated Abelian free 
  groups are rank-preserving we obtain~$\rk G = \rk G'$.

  Moreover, the fact that $\varphi$ induces a well-defined map between 
  the quotients shows that $\varphi(\tors G) \subset \varphi(0) + \tors
  G'$. Because $\varphi$ is bijective it follows that
  \[ |\tors G| \leq |\tors G'|.
  \]
  Applying the same argument to~$\varphi^{-1}$ shows the reverse
  inequality, and hence we obtain $|\tors G| = |\tors G'|$.
\end{proof}

Of course, we cannot recover the complete algebraic structure of 
the torsion part -- there are finite Abelian groups of the same 
cardinality that are not isomorphic.

Another way to see that finitely generated Abelian groups that admit
isomorphic Cayley graphs have the same rank uses the growth rate: The
rank of a finitely generated Abelian group coincides with the growth
rate of the group~\cite[Chapter~VI]{delaharpe} and the growth rate is
preserved by graph isomorphisms.

\begin{problem}
  Can also the size of the torsion part of a finitely generated
  Abelian group be extracted from the growth function of every Cayley
  graph (with respect to a finite generating set)?
\end{problem}

In Section~\ref{sec:appx}, we will see that at least the parity of the 
torsion part can be detected in this way. However, the general 
case seems to be open.


\section[Detecting the parity of the torsion part]%
  {Detecting the parity of the torsion part,\\ alternative approach}
  \label{sec:appx}

The discussion on \textsf{mathoverflow.net}~\cite{mathoverflow} about
whether the relation ``admit isomorphic Cayley graphs'' is transitive
for finitely generated groups contains a neat argument by G.~Hainke
and L.~Scheele that allows to distinguish Cayley graphs of~$\Z$ from
Cayley graphs of~$\Z \times \Z/2$.  The idea is that for Abelian
groups taking inverses leads to automorphisms of Cayley graphs and
that counting fixed points of these automorphisms reveals the
$\Z/2$-factor.

In the following, we show how the same argument can be used to 
reveal the parity of the torsion part of finitely generated Abelian groups:  

\begin{prop}[parity of the torsion part via size of balls]
  Let $G$ be a finitely generated Abelian group and let $S \subset G$ 
  be a finite generating set. Moreover, let $r \in \N$ with 
  $r > \diam_{d_S} \tors G$. Then 
  \[ \beta(r) \equiv |\tors G| \mod 2,  
  \]
  where $\beta(r)$ is the number of elements of the $d_S$-ball~$B_S(r)$ of 
  radius~$r$ around~$0$ in~$G$.
\end{prop}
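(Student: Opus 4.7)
The plan is to exploit the observation credited to Hainke and Scheele: the inversion map $\iota \colon g \mapsto -g$ is a graph automorphism of $\Cay GS$ (assuming, as we may, that $S$ is symmetric), because inverting both endpoints of an edge $\{g, g+s\}$ yields the edge $\{-g, -g-s\}$ with the generator $-s \in S$. Since $d_S(0, -g) = d_S(0, g)$, the involution $\iota$ restricts to an involution on the ball~$B_S(r)$.

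Next I would partition $B_S(r)$ into orbits of this involution. Each orbit has cardinality one or two, and the singleton orbits are precisely the \emph{fixed points} of $\iota$, i.e.~the elements of the 2-torsion subgroup $G[2] := \{g \in G \mid 2g = 0\}$ that happen to lie in $B_S(r)$. Therefore
\begin{equation*}
  \beta(r) = |B_S(r)| \equiv \bigl| G[2] \cap B_S(r) \bigr| \pmod 2.
\end{equation*}
The hypothesis $r > \diam_{d_S}\tors G$ enters here: since $0 \in \tors G$, this bound on the diameter implies $\tors G \subset B_S(r)$, and $G[2] \subset \tors G$, so in fact $G[2] \cap B_S(r) = G[2]$.

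It remains to show the purely algebraic statement that for the finite Abelian group $T := \tors G$, one has $|T[2]| \equiv |T| \pmod 2$. This is a standard structure-theoretic observation: if $|T|$ is odd, then $T$ contains no element of order~$2$, so $|T[2]| = 1$; if $|T|$ is even, the $2$-torsion subgroup $T[2]$ is a nontrivial elementary abelian $2$-group, hence of even order~$2^k$ with $k \geq 1$. Combining this congruence with the previous display gives $\beta(r) \equiv |\tors G| \pmod 2$, as desired.

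The argument is short and its only real content is the involution trick; there is no serious obstacle. The one point that deserves care is verifying that the hypothesis $r > \diam_{d_S}\tors G$ is exactly strong enough to guarantee both $\tors G \subset B_S(r)$ and, via $G[2] \subset \tors G$, the equality $G[2] \cap B_S(r) = G[2]$; with $S$ chosen symmetric and $0 \in \tors G$, this follows immediately from the definition of diameter.
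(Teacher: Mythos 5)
Your proof is correct and follows essentially the same route as the paper: the inversion involution $g \mapsto -g$ restricts to $B_S(r)$, its fixed points are exactly the elements of order at most~$2$, and these all lie in $\tors G \subset B_S(r)$ because $r$ exceeds the diameter of the torsion subgroup. The only (immaterial) difference is in the final algebraic step, where you deduce $|T[2]| \equiv |T| \pmod 2$ directly from Cauchy's theorem and the fact that $T[2]$ is elementary abelian, whereas the paper counts $2^{|I|}$ elements of order at most~$2$ via the cyclic decomposition of $\tors G$.
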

\begin{proof}
  Because $G$ is Abelian, the map
  \begin{align*}
    \tau \colon G & \longrightarrow G \\
    g & \longmapsto -g
  \end{align*}
  is an automorphism of~$\Cay GS$. Because $\tau (0 )= 0$ and because
  graph isomorphisms are isometries with respect to the graph metrics
  it follows that $\tau (B_S(r)) = B_S(r)$. Moreover, $\tau \circ \tau
  = \id_G$, and the fixed points of~$\tau$ are exactly the elements of~$G$ 
  of order at most~$2$; in particular, all fixed points of~$\tau$ lie 
  in~$\tors G$ and hence in~$B_S(r)$. Therefore, 
  \[ \beta(r) = |B_S(r)| \equiv \bigl|\{ g \in G \mid \tau(g) = g \}\bigr|
     \mod 2,
  \] 
  and thus
  \[ \beta(r)
     \equiv \bigl| \{ g \in G \mid \text{$g$ has order at most~$2$}\}\bigr| 
     \mod 2.
  \]
  Now counting the number of elements of order~$2$ in~$G$ finishes the proof 
  (see Lemma~\ref{lem:order2} below).
\end{proof}

\begin{lem}[number of elements of order at most~$2$]\label{lem:order2}
  Let $G$ be a finite Abelian group. Then 
  \[ \bigl| \{ g\in G \mid \text{$g$ has order at most~$2$}\}
     \bigr|
     \equiv 
     |G| 
     \mod 2.
  \]
\end{lem}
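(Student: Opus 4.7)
The plan is to exploit the involution $\tau\colon G\to G$, $g\mapsto -g$, the same map used in the preceding proposition. This is a well-defined map from $G$ to itself with $\tau\circ\tau=\id_G$, so it partitions $G$ into its fixed-point set and a collection of two-element orbits~$\{g,-g\}$ with $g\neq -g$. By definition, the fixed points of~$\tau$ are precisely the elements $g\in G$ satisfying $2g=0$, i.e., the elements of order at most~$2$. Writing $F:=\{g\in G\mid \text{$g$ has order at most $2$}\}$, we therefore obtain
\[ |G| = |F| + 2 \cdot N, \]
where $N$ is the number of two-element $\tau$-orbits. Reducing modulo~$2$ gives the claimed congruence $|F|\equiv |G|\pmod 2$.

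The only point that requires any thought is that $\tau$ is indeed an involution on the finite set~$G$ (obvious, since $-(-g)=g$) and that the two types of orbits are disjoint and cover~$G$ (also obvious). No structure theorem is needed; the proof is pure orbit-counting for a $\Z/2$-action on a finite set. There is no real obstacle here.

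As a sanity check, one can verify the conclusion against the structure theorem: if $G\cong \prod_{i=1}^k \Z/n_i$, then $|F|=2^j$ where $j$ is the number of indices~$i$ with $n_i$ even, and $|G|=\prod n_i$; both are odd exactly when every $n_i$ is odd, i.e., when $j=0$, which matches the congruence. But the orbit-counting proof above is shorter and coordinate-free, and it integrates cleanly with the proof of the preceding proposition, so that is the approach I would write up.
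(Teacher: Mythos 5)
Your proof is correct, but it takes a genuinely different route from the paper. The paper proves the lemma via the structure theorem for finite Abelian groups: it writes $G \cong A \times \prod_{i \in I} B_i$ with $A$ of odd order and each $B_i$ cyclic of even order, counts exactly $2^{|I|}$ elements of order at most~$2$, and observes that $2^{|I|} \equiv |G| \bmod 2$. You instead run the orbit-counting argument for the involution $g \mapsto -g$ directly on the finite group, getting $|G| = |F| + 2N$. Your argument is more elementary (no classification needed) and in fact more general: inversion is an involution on \emph{any} finite group, Abelian or not, with fixed-point set exactly the elements of order at most~$2$, so the congruence holds without the Abelian hypothesis. It also dovetails with the proof of the preceding proposition, which already uses precisely this mod-$2$ fixed-point count for $\tau$ on the ball $B_S(r)$ --- so in effect you are applying the same parity argument twice rather than switching tools. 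What the paper's proof buys in exchange is the sharper statement that the number of elements of order at most~$2$ is exactly a power of~$2$, which is more information than the congruence, though that extra precision is not needed anywhere.
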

\begin{proof}
  Because $G$ is a finite Abelian group, we can write 
  \[ G \cong A \times \prod_{i \in I} B_i, 
  \]
  where $A$ is a finite Abelian group of odd order, $I$ is a finite
  set, and all~$B_i$ are finite cyclic groups of even order. An
  element of~$G$ has order at most~$2$ if and only if all of its
  components in the above product decomposition have order at
  most~$2$. The group~$A$ has exactly one element of order at most~$2$
  (namely, $0$), and each of the~$B_i$ has exactly two such elements 
  (namely, $0$ and the element corresponding to~$|B_i|/2$). Hence, 
  $G$ has exactly $2^{|I|}$~elements of order at most~$2$. Because 
  $|G| \equiv 2^{|I|} \mod 2$, the claim follows.
\end{proof}

\medskip
\vfill

\noindent
\emph{Clara L\"oh}\\
  {\small
  \begin{tabular}{@{\qquad}l}
    Fakult\"at f\"ur Mathematik\\
    Universit\"at Regensburg\\
    93040 Regensburg\\
    Germany\\
    \textsf{clara.loeh@mathematik.uni-regensburg.de}\\
    \textsf{http://www.mathematik.uni-regensburg.de/loeh}
  \end{tabular}}
\end{document}